\let\mathcal\mathscr
\numberwithin{equation}{section}
\newcommand{\beql}[1]{\begin{equation}\label{#1}}
\newcommand{\eeq}{\end{equation}}
\newcommand{\twosum}[2]{\sum_{\substack{#1\\#2}}}
\newcommand{\be}{\mathbf{e}}
\newtheorem{theorem}{Theorem}[section]
\newtheorem{lemma}[theorem]{Lemma}
\newtheorem{conjecture}[theorem]{Conjecture}
\theoremstyle{definition}
\renewcommand{\phi}{\varphi}
\renewcommand{\rho}{\varrho}
\newcommand{\card}{\#}
\newcommand{\0}{\mathbf{0}}
\newcommand{\PP}{\mathbb{P}}
\newcommand{\FF}{\mathbb{F}}
\newcommand{\ZZ}{\mathbb{Z}}
\newcommand{\ZZp}{\mathbb{Z}_{\mathrm{prim}}}
\newcommand{\NN}{\mathbb{N}}
\newcommand{\QQ}{\mathbb{Q}}
\newcommand{\RR}{\mathbb{R}}
\newcommand{\cR}{\mathcal{R}}
\renewcommand{\leq}{\leqslant}
\renewcommand{\le}{\leqslant}
\renewcommand{\geq}{\geqslant}
\renewcommand{\ge}{\geqslant}
\renewcommand{\bar}{\overline}
\newcommand{\ma}{\mathbf}
\newcommand{\M}{\mathbf{M}}
\newcommand{\x}{\mathbf{x}}
\newcommand{\y}{\mathbf{y}}
\renewcommand{\c}{\mathbf{c}}
\renewcommand{\u}{\mathbf{u}}
\newcommand{\ee}{\mathbf{e}}
\newcommand{\ve}{\varepsilon}
\DeclareMathOperator{\rank}{rank}
\DeclareMathOperator{\meas}{meas}
\DeclareMathOperator{\Diag}{Diag}
\newcommand{\modd}[1]{\; ( \text{mod} \; #1)}
\newcommand{\Dbad}{\Delta_{\mathrm{bad}}}
\newcommand{\1}{\mathbf{1}}
\begin{document}

\begin{frontmatter}[classification=text]

\title{Counting Rational Points on Quadric Surfaces}
\author[tdb]{T.D.\ Browning\thanks{Supported by EPSRC grant \texttt{EP/P026710/1}
}}
\author[rhb]{D.R.\ Heath-Brown}

\begin{abstract}
We give an upper bound for the number of rational points of 
height at most $B$, lying on a surface defined by a quadratic
form $Q$.  The bound shows an explicit dependence on $Q$.  It is
optimal with respect to $B$, and is also
optimal for typical forms $Q$.
\end{abstract}
\end{frontmatter}


\section{Introduction}

Let $Q \in \ZZ[x_1,x_2,x_3,x_4]$ be a non-singular   
quadratic form, with 
height $\|Q\|$ and discriminant $\Delta_Q$.
We shall be concerned with completely uniform estimates for the number
of rational points of bounded height lying on the projective quadric surface 
$Q=0$.
For any  $B \geq 1$ we define the counting function
$$
N(B) = \#\{\mathbf{x}\in \ZZp^4: Q(\mathbf{x})=0, ~|\x|\leq B\},
$$
where $|\x|=\max_{1\leq i\leq 4}|x_i|$. 
Our upper bound for $N(B)$ will depend on $\Delta_Q, \|Q\|$ and on the 
square-full part 
\[\Dbad = \prod_{\substack{p^e\| \Delta_Q\\ e\geq 2}} p^e\]
of the discriminant.
It  will also be convenient to introduce the arithmetic function
\begin{equation}\label{eq:phid}
\varpi(m)=\prod_{p\mid m}(1+p^{-1}).
\end{equation}

The following is our main result.

\begin{theorem}\label{t:upper}
 Let  $\chi$ denote  the Dirichlet character induced by
 the Legendre symbol $(\frac{\Delta_Q}{\cdot})$, and assume that
 $\Dbad\le B^{1/20}$. Then for any fixed $\ve>0$ we have
\[N(B)\ll_{\ve} \varpi(\Delta_Q)\Dbad^{1/4+\ve}
\left(\frac{\|Q\|^4}{|\Delta_Q|}\right)^{5/8}
\Pi_B\left(B^{4/3}+\frac{B^2}{|\Delta_Q|^{1/4}}\right),\]
where
\begin{equation}\label{eq:cQ}
\Pi_B=\prod_{p\le B}\left(1+\frac{\chi(p)}{p}\right).
\end{equation}
The implied constant in this estimate only depends on the choice of $\ve$. 
\end{theorem}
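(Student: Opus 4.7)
The natural approach is to count solutions via a smoothed circle method, specifically Heath-Brown's $\delta$-symbol version, rather than trying to apply the $q$-analogue of Van der Corput or a direct geometric sieve. Concretely, I would introduce a smooth weight $w(\x/B)$ supported on $|\x|\asymp 1$, write
\[
N(B)\le \sum_{\x\in\ZZp^4}w(\x/B)\,\delta(Q(\x))
\]
up to a boundary error, and then expand the delta-symbol as a sum over moduli $q\le Q_0$ with $Q_0$ of order $B\cdot|\Delta_Q|^{-1/4}$ or so. This produces the standard expression
\[
N(B)\;\ll\; B^4\sum_{q\le Q_0}\frac{1}{q^4}\;\sumstar_{a\bmod q}\,S_q(a)\,I_q(a)+(\text{boundary}),
\]
where $S_q(a)=\sum_{\x\bmod q}\e_q(aQ(\x))$ is the complete quadratic exponential sum and $I_q(a)$ is the oscillatory Fourier integral over the support of $w$. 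I would set this up carefully so that the $B^{4/3}$ term in the theorem can later be identified as the contribution of the small-$q$ range, while the $B^2/|\Delta_Q|^{1/4}$ term comes from the main range.

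The first main technical step is to evaluate $S_q(a)$. Multiplicativity reduces to primes: for $p\nmid 2\Delta_Q$ the standard Gauss sum computation gives
\[
S_{p^k}(a)=p^{2k}\left(\tfrac{a\Delta_Q}{p}\right)^{k}\bigl(\text{Gauss factor}\bigr),
\]
which is precisely where the Legendre symbol $\chi=\bigl(\tfrac{\Delta_Q}{\cdot}\bigr)$ enters; summing in $q$ after division by $q^4$ produces the Dirichlet series $L(s,\chi)$ at $s=1$, whose partial Euler product is exactly $\Pi_B$ in \eqref{eq:cQ}. At primes $p\mid \Delta_Q$ but with $p\|\Delta_Q$, one gets a satisfactory bound with the $\varpi(\Delta_Q)$ factor; at primes contributing to $\Dbad$, the exponential sum must be bounded by a direct lattice-point argument using the local geometry of the singular locus, and this is where the $\Dbad^{1/4+\ve}$ loss (and ultimately the reason for the hypothesis $\Dbad\le B^{1/20}$) is incurred. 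Controlling this bad-prime factor uniformly in $a$ and $q$ will be the most delicate bookkeeping.

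The second step is to bound the oscillatory integral $I_q(a)$. Rescaling and stationary phase on the quadric give a saving of order $|q^{-1}B^2\cdot(\text{Hessian of }Q)^{-1/2}|$, producing the $|\Delta_Q|^{-1/4}$ factor in the second term of the theorem once summed against the $L$-values, while the shape of the Hessian relative to $\|Q\|$ accounts for the conditioning $(\|Q\|^4/|\Delta_Q|)^{5/8}$; the exponent $5/8$ will emerge from balancing the stationary phase bound against a direct $L^\infty$ estimate of $I_q(a)$ on the complement of the stationary set. I would carry out this balancing in two ranges of $q$, large and small.

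Finally, the small-$q$ portion of the $q$-sum is where the circle method cannot win, and I would replace it by the uniform dimension-growth bound of the determinant method for quadric surfaces, which on any affine chart of $Q=0$ yields $O_\ve(B^{4/3+\ve})$ points; combined with the local factors at primes dividing $\Delta_Q$ one obtains exactly the $B^{4/3}$ contribution in the stated bound. The two pieces are then added to give the theorem. The principal obstacle I foresee is not the circle-method calculation itself but rather keeping the dependence on $\|Q\|$ and $\Delta_Q$ fully uniform throughout: one has to track how the stationary points of the quadric scale with $\|Q\|/|\Delta_Q|^{1/4}$, and simultaneously ensure that the bad-prime exponential-sum bounds do not absorb factors of $\|Q\|^\ve$ that would spoil the exponent $5/8$.
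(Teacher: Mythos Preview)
Your approach is entirely different from the paper's, and as written it has a structural gap.

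The paper does \emph{not} use the circle method. Instead it slices the region $|\x|\le B$ by hyperplanes $\c.\x=0$ with $|\c|\ll B^{1/3}$ (via Siegel's lemma), so that each slice is a conic. Points on each conic are controlled by a lattice-point argument: primitive zeros of the ternary form $Q_{\c}$ lie in $O(R(Q^*(\c)))$ lattices of large determinant (Lemma~\ref{two}), and a geometry-of-numbers argument confines them to ellipsoids whose volume is governed by $Q^*(\c)$ (Lemma~\ref{Q*B}). The $B^{4/3}$ term arises simply because there are $O(B^{4/3})$ slices; the $B^2/|\Delta_Q|^{1/4}$ term comes from the volume factor. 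The factors $\Pi_B$, $\varpi(\Delta_Q)$, $\Dbad^{1/4+\ve}$, and $(\|Q\|^4/|\Delta_Q|)^{5/8}$ all emerge from a Shiu-type average of the multiplicative function $R(Q^*(\c))$ over $\c$ (Theorem~\ref{t:nair}), which is the technical heart of the paper.

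Your proposal has a genuine problem at the final step. In the $\delta$-symbol expansion, the small-$q$ range \emph{is} the main term; you cannot ``replace it by the dimension-growth bound'' for $N(B)$ itself, since that is the quantity you are trying to estimate. If you mean to bound the main term (singular series times singular integral) directly, then you must evaluate both uniformly in $Q$, and it is not at all clear how the exponent $5/8$ on $\|Q\|^4/|\Delta_Q|$ or the precise $\Dbad^{1/4+\ve}$ would fall out of stationary phase and bad-prime exponential sums; your sketch of this (``balancing stationary phase against an $L^\infty$ bound'') is not an argument. Moreover, dimension growth gives $B^{4/3+\ve}$, not $B^{4/3}$, so even if the logic were repaired you would lose the $\ve$-free exponent that the theorem claims. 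The paper's slicing method avoids all of these issues by never needing a main-term/error-term decomposition at all.
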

\bigskip

The theorem is a refinement of work by Browning \cite{qpub} in three
key aspects. Firstly, the latter has a $B^\ve$-loss; secondly, it only
pertains to the case of diagonal quadratic forms $Q$; and thirdly, it
requires that $\Delta_Q$ is square-free.  
Although Theorem~\ref{t:upper} handles general quadratic forms, it is still
sharpest for quadratic forms  
whose discriminant is close to being square-free and
$\|Q\|^4$ in size.

For a fixed form $Q$ with at least one non-trivial zero one can deduce
from the results of Heath-Brown \cite[Theorems 6 \& 7]{circle} that
$$
N(B)\sim 
\begin{cases}
c_QB^2, & \text{ if $\Delta_Q\not=\square$},\\
c_QB^2\log B, &\text{ if $\Delta_Q=\square$},
\end{cases}
$$
as $B\rightarrow\infty$, where $c_Q$ is a positive constant.  When
$\Delta_Q\not=1$ is square-free and of order $\|Q\|^4$,
the constant $c_Q$
is of exact order $|\Delta_Q|^{-1/4}L(1,\chi)$, so that Theorem \ref{t:upper}
is optimal for large $B$, 
apart possibly for the factors $\varpi(\Delta_Q)$,
$\Dbad^{1/4+\ve}$ and $(\|Q\|^4/\|\Delta_Q|)^{5/8}$.

It is natural to ask to what extent one can produce uniform upper bounds
for $N(B)$ which depend only on $B$ and not on the  coefficients of $Q$.
In the spirit of recent work by Walsh \cite{walsh} on rational curves, we
have been led to make the following conjecture.

\begin{conjecture}
There is  an absolute constant $c>0$ such that 
$$
N(B)\leq \begin{cases}
cB^2, & \text{ if $\Delta_Q\neq \square$,}\\
cB^2\log B, & \text{ if $\Delta_Q\neq 0$,}
\end{cases}
$$
for every $B\ge 2$. 
\end{conjecture}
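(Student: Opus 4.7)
The plan is to bootstrap Theorem~\ref{t:upper} to an absolute bound by a case analysis on the shape of $Q$, supplemented by determinant-method arguments in the regime where the theorem does not apply. After reducing to primitive forms (which does not affect $N(B)$), I would first isolate the \emph{generic case}, in which $\Dbad\le B^{1/20}$ and $\|Q\|^4$ is comparable to $|\Delta_Q|$. In this range Theorem~\ref{t:upper} yields $N(B)\ll\varpi(\Delta_Q)\Pi_B(B^{4/3}+B^2/|\Delta_Q|^{1/4})$ up to an $O(\Dbad^\ve)$ factor. The arithmetic factor $\varpi(\Delta_Q)$ is $O(\log\log|\Delta_Q|)$ by Mertens, and can be absorbed into the $\log B$ when $\Delta_Q=\square$ (so $\Pi_B\asymp\log B$), or into an absolute constant otherwise (since then $\Pi_B=O(1)$ by the non-vanishing of $L(1,\chi)$). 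Since $|\Delta_Q|\le\|Q\|^4$, the $B^{4/3}$ piece is dominated by $B^2$ provided $\|Q\|\ll B^{1/2}$, with the remaining range treated by a direct lattice-point count on slices.

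When $\|Q\|^4$ greatly exceeds $|\Delta_Q|$, the factor $(\|Q\|^4/|\Delta_Q|)^{5/8}$ in Theorem~\ref{t:upper} is unbounded, and a direct appeal fails. Here I would apply a $\mathrm{GL}_4(\ZZ)$-reduction to replace $Q$ by an equivalent form $Q\circ U$ whose height is near-minimal for its discriminant, of order $|\Delta_Q|^{1/4}$. The substitution $\x=U\x'$ distorts $|\x|\le B$ into a parallelepiped, which one covers by $O(1)$ standard boxes whose sizes are controlled by the singular values of $U$; inserting these into Theorem~\ref{t:upper} applied to $Q\circ U$ and summing should yield the desired uniform bound $\ll B^2$.

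For the complementary regime $\Dbad>B^{1/20}$, where Theorem~\ref{t:upper} is not applicable, I would invoke the $p$-adic determinant method of Heath-Brown and Salberger. The strong $p$-adic constraints imposed by large prime powers $p^e\|\Delta_Q$ confine the integer solutions to a union of auxiliary subvarieties of $\{Q=0\}\cap[-B,B]^4$, to which one then applies Walsh's uniform bound on plane conic sections together with the trivial $O(B^2)$ bound along each rational line on $Q$.

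The main obstacle will be the square-discriminant case $\Delta_Q=\square$, where the surface carries two rulings of rational lines and the target is $cB^2\log B$ with an \emph{absolute} constant $c$. Matching this forces the product $\varpi(\Delta_Q)\Pi_B$ to contribute precisely one factor of $\log B$ with a uniformly bounded coefficient---a level of precision beyond what Theorem~\ref{t:upper} provides as stated, and likely requiring a sharper main-term analysis that separates the contribution of the lines on $Q$ cleanly from that of the isolated rational points.
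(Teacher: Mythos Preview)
The statement you are trying to prove is explicitly labelled a \emph{Conjecture} in the paper, and the paper does not prove it; it is presented as an open problem, motivated by analogy with Walsh's uniform bounds for curves. There is therefore no ``paper's proof'' to compare against, and your proposal is an attempted resolution of an open problem. Several of its steps do not go through.

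First, in your ``generic case'' you assert that $\Pi_B=O(1)$ with an absolute constant when $\Delta_Q\neq\square$. The paper only shows that $\Pi_B$ is bounded \emph{independently of $B$}; the bound it gives, via $\Pi_B\ll L(1+1/\log B,\chi)$ together with the partial-summation argument, depends on $\Delta_Q$ (of order $\log|\Delta_Q|$ at best, and the argument via $c(\Delta_Q)$ is even less uniform). Likewise $\varpi(\Delta_Q)$ is genuinely unbounded. So even when $\Dbad$ and $\|Q\|^4/|\Delta_Q|$ are $O(1)$, Theorem~\ref{t:upper} does not deliver $N(B)\ll B^2$ with an absolute implied constant.

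Second, the $\GL_4(\ZZ)$-reduction does not work as described. A unimodular substitution $\x=U\x'$ sends the cube $|\x|\le B$ to a parallelepiped whose aspect ratios are governed by the singular values of $U$, and these can be as skewed as $\|Q\|^{O(1)}$. You cannot cover such a region by $O(1)$ standard cubes of side $\ll B$ with an absolute constant; and if instead you cover it by boxes shaped to the singular values and feed those into Theorem~\ref{t:upper}, the resulting bound reintroduces precisely the factor $(\|Q\|^4/|\Delta_Q|)^{5/8}$ you were trying to remove. The paper in fact remarks that it is unclear whether any power of $\|Q\|^4/|\Delta_Q|$ is genuinely necessary, which is a signal that no such reduction is known.

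Third, your treatment of the regime $\Dbad>B^{1/20}$ is a sketch rather than an argument: an appeal to ``the $p$-adic determinant method'' together with Walsh on conics does not by itself produce a uniform $O(B^2)$ bound for quadric surfaces, and no such result appears in the literature. The paper's discussion after the conjecture --- showing that any bound of the shape \eqref{abb} must carry at least $\Dbad^{1/4}$ --- indicates that one cannot simply tighten the existing machinery; a proof of the conjecture would require a genuinely new idea.
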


It might seem that the occurrence of the factors $\Dbad$ and
$\|Q\|^4/|\Delta_Q|$ is a defect of Theorem \ref{t:upper}.
However we will show
below that if an estimate of the type
\beql{abb}
N(B)\ll \varpi(\Delta_Q)\Dbad^{\alpha}
\left(\frac{\|Q\|^4}{|\Delta_Q|}\right)^{\beta}
\Pi_B\left(B^{4/3}+\frac{B^2}{|\Delta_Q|^{1/4}}\right),
\eeq
holds, with constants $\alpha$ and $\beta$, then we must have
$\alpha\ge 1/4$. However it is not clear whether a power of
$\|Q\|^4/|\Delta_Q|$ is necessary.
In concurrent  work \cite{xiyi} we have  applied Theorem \ref{t:upper} to
investigate the density of rational points on the hypersurface
\[x_0y_0^2+x_1y_1^2+x_2y_2^2+x_3y_3^2=0\]
in $\mathbb{P}^3\times\mathbb{P}^3$, and for this it is essential
that $\alpha<1/2$ and $\beta<3/4$.

To show that one must have $\alpha\ge 1/4$ we use the form
\[Q(\x)=k(x_1^2+x_2^2+x_3^2-x_4^2)\]
with $k\in\NN$.  One easily sees that $N(B)\gg B^2$, while
$\Dbad=k^4$ and
\[\varpi(\Delta_Q)\left(\frac{\|Q\|^4}{|\Delta_Q|}\right)^{\beta}
\Pi_B\left(B^{4/3}+\frac{B^2}{|\Delta_Q|^{1/4}}\right)\ll
\varpi(k)^2\left(B^{4/3}+\frac{B^2}{k}\right).\]
Thus for (\ref{abb}) to hold one must have $\alpha\ge 1/4$.

A few words are  in order about the size of the factor $\Pi_B$. 
We always have $\Pi_B=O(\log B)$ and this is the true order of $\Pi_B$ when
$\Delta_Q=\square$. Suppose now that   $\Delta_Q\neq \square$ and note first that 
\begin{equation}\label{eq:gower}
\Pi_B\ll\exp\left(\sum_{p\le B}\frac{\chi(p)}{p}\right).
\end{equation}  
However, with
$\sigma=1+(\log B)^{-1}$, we have
  \begin{align*}
    \sum_{p\le B}\frac{\chi(p)}{p}&=
    \sum_{p\le B}\frac{\chi(p)}{p^{\sigma}}+O(1)\\
    & =\sum_p\frac{\chi(p)}{p^{\sigma}}+O(1)\\
    &=\log L(\sigma,\chi)+O(1),
    \end{align*}
  the final sum running over all primes $p$. This shows that 
\[\Pi_B\ll L\left(1+\frac{1}{\log B},\chi\right).\]
In fact it is possible to show that 
$\Pi_B$ is bounded independently of $B$.
To see this, 
a standard argument found at the end of Chapter~7 of Davenport
\cite{davenport}  shows that there is a constant $c(\Delta_Q)>0$ such that 
$$
\left|\sum_{p\leq B} \frac{\chi(p)\log p}{p} \right|\leq c(\Delta_Q).
$$
(One actually finds that 
$c(\Delta_Q)\ll
1+ |L(1,\chi)|^{-1}\{|L'(1,\chi)|+\sqrt{|\Delta_Q|}\log |\Delta_Q|\}
$  is admissible, by  invoking the P\'olya--Vinogradov inequality in the argument.) 
This can be  combined with partial summation in \eqref{eq:gower} to yield the claim.

The case in which $\Delta_Q$ is a square is rather different from the
generic situation, not least because  $\Pi_B$ then has
order $\log B$. For the bulk of the paper we will consider only the
situation in which $\Delta_Q\not=\square$.  We will then point out the
modifications necessary to handle the alternative case in the final
section. 
 
Our strategy for the proof uses $O(B^{4/3})$ plane slices
through the region $|\x|\le B$. Each slice produces a conic, and we
estimate the number of points on each of these individually. This
procedure naturally gives a bound which is $\gg B^{4/3}$. The bound
for an individual conic is somewhat complicated, and the procedure by
which we average over the various plane slices is correspondingly
delicate.  In particular much care is necessary if one is to avoid
extraneous factors of the type $\log B$.

\section{Preliminary steps}\label{sP}

\subsection{Geometry of numbers}

We begin by recording a version of Siegel's lemma. (See 
\cite[Lemma 1(iv)]{annal}, for example.)

\begin{lemma}
Let  $\mathbf{x}\in \ZZ^4$ such that $|\x|\leq B$.
Then there exists a  vector $\c \in \ZZp^4$ with $|\c|\ll B^{1/3}$, 
such that $\mathbf{x}.\c=0$.
\end{lemma}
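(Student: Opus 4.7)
The plan is to apply a Dirichlet-style pigeonhole argument (equivalently, Minkowski's theorem on the rank-$3$ lattice $\{\c\in\ZZ^4:\c\cdot\x=0\}$) to produce some non-zero integer vector orthogonal to $\x$ of sup-norm $\ll B^{1/3}$, and then to pass to a primitive vector by dividing out the gcd of its entries.

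First I would dispose of the trivial case $\x=\0$ by taking $\c=(1,0,0,0)$. Otherwise, set $T=\lfloor cB^{1/3}\rfloor$ for a suitable absolute constant $c>1$, and consider the $(2T+1)^4$ vectors $\c\in\ZZ^4$ with $|\c|\le T$. Each such $\c$ produces a dot product $\c\cdot\x$ lying in $[-4TB,\,4TB]$, a set of $8TB+1$ integers. For $c$ chosen large enough that $(2T+1)^4>8TB+1$, two distinct vectors $\c_1,\c_2$ must give the same value of $\c\cdot\x$; their difference $\c'=\c_1-\c_2$ is then a non-zero element of $\ZZ^4$ with $\c'\cdot\x=0$ and $|\c'|\le 2T\ll B^{1/3}$.

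To finish, I would write $\c'=e\c$ with $e=\gcd(c'_1,\dots,c'_4)\ge 1$ and $\c\in\ZZp^4$. This $\c$ is primitive, it still satisfies $\c\cdot\x=(\c'\cdot\x)/e=0$, and $|\c|=|\c'|/e\le|\c'|\ll B^{1/3}$, as required. There is no real obstacle in this argument — the only thing to verify is that the constant $c$ can indeed be chosen so that $(2T+1)^4>8TB+1$, which just demands $c>2^{-1}\cdot 8^{1/3}$ and a small adjustment at the boundary (e.g.\ handling small $B$ trivially).
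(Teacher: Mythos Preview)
Your argument is correct and is the standard Siegel's-lemma pigeonhole proof. The paper does not actually prove this lemma; it simply cites \cite[Lemma~1(iv)]{annal}, so there is no ``paper's own proof'' to compare against --- your proposal supplies exactly the kind of argument that reference contains. (Your threshold constant is a little off: the inequality $(2T+1)^4>8TB+1$ only needs $T^3>B/2$, i.e.\ $c>2^{-1/3}$, but any fixed $c>2^{-1/3}$ works and the discrepancy is irrelevant to the $\ll B^{1/3}$ conclusion.)
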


It follows that
\begin{equation}\label{eq:step1}
N(B)\leq \sum_{\substack{\c\in \ZZp^4\\ |\c|\ll B^{1/3}}} 
\#\left\{ \x\in\ZZp^4:\,\x.\c=0,\,Q(\x)=0,\,|\x|\le B\right\}.
\end{equation}
We write $\be_4=|\c|^{-1}\c$ and extend to an orthonormal basis
$\be_1,\be_2,\be_3,\be_4$ of $\RR^4$. We may of course choose
$\be_1,\be_2,\be_3$ so that the matrix of $Q$ with respect to the
basis is
\beql{mx}
\mathbf{U}^T\mathbf{M}\mathbf{U}= 
\begin{pmatrix}
  \mu_1 & 0 & 0 & a\\
  0 & \mu_2 & 0 & b\\
  0 & 0 & \mu_3 & c\\
  a & b & c & d
\end{pmatrix}
\eeq
say, where $\mathbf{M}$
is the matrix associated to  $Q$, and $\mathbf{U}$ is the orthogonal
matrix with columns
$\be_1,\be_2,\be_3,\be_4$. Indeed we may suppose that
\[|\mu_3|\le|\mu_2|\le|\mu_1|\ll \|Q\|.\]
We can interpret the above representation as saying that the quadratic
form $Q$, when restricted to the plane $\mathbf{x}.\c=0$, can be
diagonalized as $\Diag(\mu_1,\mu_2,\mu_3)$.  Our goal is to use
information about the size of $\mu_1,\mu_2,\mu_3$ to restrict the
region in which $\x$ can lie.  We will establish the following result,
which involves the dual form $Q^*$, with  underlying matrix
$\mathbf{M}^{\text{adj}}=\Delta_Q \mathbf{M}^{-1}$.

\begin{lemma}\label{Q*B}
  Let $\c\in\ZZp^4$ be given, with $Q^*(\c)\not=0$.  Then there are ellipsoids
  $E_0,\ldots,E_m$ with
    \[m\ll \log\left(2+\frac{|\c|^2\|Q\|^3}{|Q^*(\c)|}\right),\] 
  such that each $E_j$ is centred at the origin and has
  \[\meas(E_j)\ll \frac{|Q^*(\c)|.\|Q\|^3 B^3}{|\c|^2|\Delta_Q|^{3/2}},\]
  and so that
\[\{\x\in\RR^4:\,Q(\x)=\x.\c=0,\,|\x|\le B\}\subset\bigcup_{j=0}^m E_j.\]
 \end{lemma}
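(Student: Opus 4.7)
My plan is to use the orthogonal matrix $\mathbf{U}$ from (\ref{mx}) to diagonalise $Q$ on the plane $\x\cdot\c=0$, express $Q^*(\c)$ in terms of the diagonal entries $\mu_1,\mu_2,\mu_3$, and then cover the restricted conic by origin-centred ellipsoids through a dyadic decomposition exploiting a factorisation of the indefinite ternary form. Setting $\x=\mathbf{U}\y$, the plane becomes $\{y_4=0\}$ and orthogonality gives $|\y|_2=|\x|_2\leq 2B$, while $Q$ restricts to $\mu_1 y_1^2+\mu_2 y_2^2+\mu_3 y_3^2$. The $(4,4)$-entry of the inverse of the matrix in (\ref{mx}) equals the cofactor $\mu_1\mu_2\mu_3/\Delta_Q$, whence $Q^*(\c)=\c^T\mathbf{M}^{\text{adj}}\c=|\c|^2\mu_1\mu_2\mu_3$, and the hypothesis $Q^*(\c)\neq 0$ forces all $\mu_i\neq 0$. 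Expanding $\Delta_Q$ along the fourth row with $|\mu_3|\leq|\mu_2|\leq|\mu_1|\ll\|Q\|$ and $|a|,|b|,|c|,|d|\ll\|Q\|$ yields the crucial inequality $|\Delta_Q|\ll\|Q\|^2|\mu_1\mu_2|$, which rewrites the target volume $|Q^*(\c)|\|Q\|^3 B^3/(|\c|^2|\Delta_Q|^{3/2})$ as being $\gg|\mu_3|B^3/\sqrt{|\mu_1\mu_2|}$: this is my operational target for each ellipsoid.

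Since the conic has non-trivial real points the signs of the $\mu_i$ are mixed, so exactly one of them has the isolated sign. If it is $\mu_3$, then on the conic $\mu_1 y_1^2+\mu_2 y_2^2=|\mu_3|y_3^2$ and the entire set fits inside a box with $|y_i|\leq 2B\sqrt{|\mu_3|/|\mu_i|}$ for $i=1,2$ and $|y_3|\leq 2B$, yielding a single ellipsoid of volume $\ll|\mu_3|B^3/\sqrt{|\mu_1\mu_2|}$. Otherwise the isolated sign sits on $\mu_1$ or $\mu_2$, so those two have opposite signs; say $\mu_1>0>\mu_2$. I then factor $\mu_1 y_1^2+\mu_2 y_2^2=uv$ with $u=\sqrt{\mu_1}y_1-\sqrt{|\mu_2|}y_2$ and $v=\sqrt{\mu_1}y_1+\sqrt{|\mu_2|}y_2$, so that $Q(\y)=0$ becomes $uv=-\mu_3 y_3^2$ with $|u|,|v|\ll\sqrt{|\mu_1|}B$ and $|y_3|\leq 2B$. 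I decompose $|v|$ dyadically into intervals $[V/2,V]$ for $V_0\asymp|\mu_3|B/\sqrt{|\mu_2|}\leq V\leq V_1\asymp\sqrt{|\mu_1|}B$. On each such piece the identity $|u||v|=|\mu_3|y_3^2$ combined with $|y_3|\leq 2B$ forces $|u|\ll|\mu_3|B^2/V$, so the piece sits inside an axis-parallel box in $(u,v,y_3)$-space of volume $\ll|\mu_3|B^3$, independent of $V$; transforming back via the Jacobian $\d u\,\d v=2\sqrt{|\mu_1\mu_2|}\,\d y_1\,\d y_2$ and enclosing the resulting origin-symmetric parallelepiped in an ellipsoid produces one of volume $\ll|\mu_3|B^3/\sqrt{|\mu_1\mu_2|}$. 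A single extra ellipsoid of the same volume handles the residual range $|v|\leq V_0$, where $|u|\ll\sqrt{|\mu_2|}B$ follows directly from $|v-u|=2\sqrt{|\mu_2|}|y_2|\leq 4\sqrt{|\mu_2|}B$.

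The total number of ellipsoids is $\ll 1+\log(V_1/V_0)=1+\log(\sqrt{|\mu_1\mu_2|}/|\mu_3|)$, and using $|\mu_1\mu_2|\leq\|Q\|^2$ this is $\ll\log(\|Q\|^3/|\mu_1\mu_2\mu_3|)=\log(|\c|^2\|Q\|^3/|Q^*(\c)|)$, giving the claimed bound on $m$. The main obstacle I anticipate is the clean uniform handling of sign configurations: the argument splits into the easy case where $\mu_3$ carries the isolated sign (single ellipsoid suffices) and the factorisation case (dyadic over the factored variable), and one must check that the inequalities $|\Delta_Q|\ll\|Q\|^2|\mu_1\mu_2|$ and $|\mu_1\mu_2|\leq\|Q\|^2$ absorb all implied constants uniformly. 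Both come directly from the elementary expansion of $\det\mathbf{M}'$ along its fourth row, so the bookkeeping should go through without essential difficulty.
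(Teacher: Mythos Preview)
Your proposal is correct and follows essentially the same route as the paper. The paper also diagonalises on the hyperplane via the orthonormal basis, proves $\mu_1\mu_2\mu_3=Q^*(\be_4)$ and $|\Delta_Q|\ll\|Q\|^2|\mu_1\mu_2|$, and then splits according to whether $\mu_1,\mu_2$ have equal or opposite signs---which is exactly your split on whether $\mu_3$ carries the isolated sign. In the opposite-sign case the paper factors $y_1^2-\nu^2 y_2^2$ with $\nu=\sqrt{-\mu_2/\mu_1}$ and decomposes dyadically in $y_1+\nu y_2$, which is your variable $v$ up to the scalar $\sqrt{\mu_1}$; the residual small-$v$ region is handled identically. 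The only omission in your write-up is the trivial definite case (all $\mu_i$ of one sign), where the set reduces to $\{\mathbf{0}\}$ and a single ellipsoid suffices; the paper's case division absorbs this automatically into the ``same sign'' branch.
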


\begin{proof} 
The matrix (\ref{mx}) must have entries
which are $O(\|Q\|)$, since the entries of $\mathbf{U}$ have modulus
at most 1.  It therefore follows that
\beql{B1}
|\Delta_Q|\ll \|Q\|^2|\mu_1\mu_2|.
\eeq
The adjoint of the matrix $\mathbf{U}^T\mathbf{M}\mathbf{U}$ will have
$\mu_1\mu_2\mu_3$ as its bottom right entry, whence
$\mathbf{U}^T\mathbf{M}^{-1}\mathbf{U}$ will have
$\det(M)^{-1}\mu_1\mu_2\mu_3$ as its bottom right entry. It follows that
\[(0,0,0,1)\mathbf{U}^T\mathbf{M}^{-1}\mathbf{U}
\begin{pmatrix} 0\\ 0 \\0 \\1 \end{pmatrix}= 
\det(M)^{-1}\mu_1\mu_2\mu_3.\]
However
\[\mathbf{U}
\begin{pmatrix}0\\ 0 \\0 \\1 \end{pmatrix}=\be_4,\]
whence
\[\be_4^T\mathbf{M}^{-1}\be_4=\det(M)^{-1}\mu_1\mu_2\mu_3.\]
We then conclude that
\beql{B2}
\mu_1\mu_2\mu_3=Q^*(\be_4).
\eeq

If $\x.\c=0$ with $|\x|\le B$, then we can write
$\x=y_1\be_1+y_2\be_2+y_3\be_3$, whence
\[Q(\x)=\mu_1y_1^2+\mu_2y_2^2+\mu_3y_3^2.\]
Moreover $|y_i|\le|\x|\le B$, since the vectors $\be_i$ were taken to
be orthonormal.  Thus if $Q(\x)=0$ we have
\[|\mu_1y_1^2+\mu_2y_2^2|\le |\mu_3|y_3^2\le |\mu_3|B^2.\]
When $\mu_1$ and $\mu_2$ have the same sign we immediately deduce that
$(y_1,y_2,y_3)$ lies in a 3-dimensional ellipsoid $E_0$  
 having semi-axes of lengths $2\sqrt{|\mu_3/\mu_1|}B$, $2\sqrt{|\mu_3/\mu_2|}B$
and $2B$. Thus, using (\ref{B1}) and (\ref{B2}) we have
\beql{B5}
\meas(E_0) 
\ll\frac{|\mu_3|}{\sqrt{|\mu_1 \mu_2|}}B^3
=\frac{|Q^*(\be_4)|}{|\mu_1 \mu_2|^{3/2}}B^3
\ll \frac{|Q^*(\c)|.\|Q\|^3 B^3}{|\c|^2|\Delta_Q|^{3/2}},
\eeq
since we took $\be_4=|\c|^{-1}\c$. This means of course that $\x$ is
also restricted to lie in such an ellipsoid.

When $\mu_1$ and $\mu_2$ have opposite signs things are a little more
awkward. Let $\nu=\sqrt{-\mu_2/\mu_1}$.  Then if $Q(\x)=0$ as above we
have
\beql{B3}
|y_1^2-\nu^2y_2^2|\le |\mu_3/\mu_1|B^2.
\eeq
Suppose, say that $y_1$ and $y_2$ are both non-negative (the other
cases being handled similarly).  Then if
\[y_1+\nu y_2\le \sqrt{|\mu_3/\mu_1|}B\]
we see that $(y_1,y_2,y_3)$ lies in an ellipsoid $E_0$ with semi-axes
whose lengths are 
$$
2\sqrt{|\mu_3/\mu_1|}B, \quad 
2\nu^{-1}\sqrt{|\mu_3/\mu_1|}B=\sqrt{|\mu_3/\mu_2|}B, 
\quad \text{ and } \quad 2B, 
$$
as
before.  Otherwise 
\beql{B4}
2^{m-1}\sqrt{|\mu_3/\mu_1|}B<y_1+\nu y_2\le 2^m\sqrt{|\mu_3/\mu_1|}B
\eeq
for some positive integer $m$. It follows from (\ref{B3}) that
\[y_1^2\le \nu^2B^2+|\mu_3/\mu_1|B^2\le 2\nu^2B^2,\]
and hence $y_1\le 2\nu B$ and $y_1+\nu y_2\le 3\nu B$. We therefore
have $2^m\le 6\sqrt{|\mu_2/\mu_3|}$, so that
\[ m\ll 1+\log\left|\frac{\mu_2}{\mu_3}\right|=
1+\log\left|\frac{\mu_1\mu_2^2}{\mu_1\mu_2\mu_3}\right|
\ll \log\left(2+\|Q\|^3/|Q^*(\be_4)|\right).\] 

For each such $m$ we have
\[|y_1-\nu y_2|=\frac{|y_1^2-\nu^2y_2^2|}{y_1+\nu y_2}
\le \frac{|\mu_3/\mu_1|B^2}{2^{m-1}\sqrt{|\mu_3/\mu_1|}B}=
2^{1-m}\sqrt{|\mu_3/\mu_1|}B.\]
Since
\[|y_1+\nu y_2|\le 2^m\sqrt{|\mu_3/\mu_1|}B,\]
by (\ref{B4}), the point $(y_1,y_2)$ lies in a parallelogram of area
\[\ll \nu^{-1}2^{1-m}\sqrt{|\mu_3/\mu_1|}B\times 2^m\sqrt{|\mu_3/\mu_1|}B\ll
\frac{|\mu_3|}{\sqrt{|\mu_1 \mu_2|}}B^2.\]
It follows that, for each $m$, there is an ellipse of area
$O(|\mu_3|.|\mu_1\mu_2|^{-1/2}B^2)$ containing $(y_1,y_2)$.  We then
get 3-dimensional ellipsoids $E_m$, one for each $m$, with volume
bounded as in (\ref{B5}), such that $(y_1,y_2,y_3)$ necessarily lies
in one of the $E_m$.  This completes the proof of the lemma.
\end{proof}

The following result is well-known in principle, but merits a formal
proof.

\begin{lemma}\label{lem:dav}
  Let $\mathsf{\Lambda}\subseteq\RR^n$ be a lattice of dimension $k\leq n$.
  Then there exists a basis $\mathbf{g}^{(1)},\ldots,\mathbf{g}^{(k)}$
  of $\mathsf{\Lambda}$ for which
  \beql{every}
  \prod_{j=1}^k |\mathbf{g}^{(j)}|\ge\det(\mathsf{\Lambda}),
  \eeq
  and such that if $\x\in\RR^n$ can be written as 
  \beql{xr}
  \x=\sum_{j=1}^k c_{j}\mathbf{g}^{(j)},
  \eeq
then 
\[|c_j|\le n^{2n}|\x|/|\mathbf{g}^{(j)}|.\]
\end{lemma}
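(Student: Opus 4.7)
The plan is to take $\mathbf{g}^{(1)},\ldots,\mathbf{g}^{(k)}$ to be a Minkowski-reduced basis of $\mathsf{\Lambda}$. Such a basis exists classically and enjoys the property $|\mathbf{g}^{(j)}|\le\gamma_n\lambda_j$ for each $j$, where $\lambda_1\le\cdots\le\lambda_k$ denote the successive minima of $\mathsf{\Lambda}$ in $\RR^n$ with respect to the Euclidean unit ball, and $\gamma_n$ is an absolute constant. The inequality (\ref{every}) is then immediate from Hadamard's inequality applied to the Gram matrix $G=(G_{ij})=(\mathbf{g}^{(i)}\cdot\mathbf{g}^{(j)})$: one has $\det(\mathsf{\Lambda})^2=\det(G)\le\prod_jG_{jj}=\prod_j|\mathbf{g}^{(j)}|^2$.

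For the coefficient bound, I would introduce the reciprocal basis $\mathbf{h}^{(1)},\ldots,\mathbf{h}^{(k)}$ of $V:=\operatorname{span}_{\RR}(\mathsf{\Lambda})$ characterised by the duality relations $\mathbf{h}^{(i)}\cdot\mathbf{g}^{(j)}=\delta_{ij}$. For any $\x\in V$ expanded as in (\ref{xr}), one recovers $c_j=\x\cdot\mathbf{h}^{(j)}$, so that Cauchy--Schwarz gives $|c_j|\le|\x|\cdot|\mathbf{h}^{(j)}|$. The claimed estimate thus reduces to the reducedness bound
\[|\mathbf{g}^{(j)}|\cdot|\mathbf{h}^{(j)}|\le n^{2n}\]
for each $j$.

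This reducedness bound is the main obstacle of the proof. The key identity is
\[|\mathbf{h}^{(j)}|=\det(\mathsf{\Lambda}_{(j)})/\det(\mathsf{\Lambda}),\]
where $\mathsf{\Lambda}_{(j)}\subset V_{(j)}:=\operatorname{span}(\mathbf{g}^{(i)}:i\neq j)$ is the $(k-1)$-dimensional sublattice generated by the remaining basis vectors; this follows from the cofactor formula $(G^{-1})_{jj}=\det(G_{(j)})/\det(G)$, together with $|\mathbf{h}^{(j)}|^2=(G^{-1})_{jj}$ and the observation that $G_{(j)}$ is the Gram matrix of $\mathsf{\Lambda}_{(j)}$. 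Applying Hadamard to $\mathsf{\Lambda}_{(j)}$ yields $\det(\mathsf{\Lambda}_{(j)})\le\prod_{i\neq j}|\mathbf{g}^{(i)}|$, while Minkowski's second theorem combined with Minkowski-reducedness gives $\prod_i|\mathbf{g}^{(i)}|\le\gamma_n^n\prod_i\lambda_i\le C_n\det(\mathsf{\Lambda})$. Putting these ingredients together produces
\[|\mathbf{g}^{(j)}|\cdot|\mathbf{h}^{(j)}|\le\frac{\prod_i|\mathbf{g}^{(i)}|}{\det(\mathsf{\Lambda})}\le C_n,\]
and a modest accounting of the constants shows that $C_n\le n^{2n}$ is easily achievable. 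The serious work is therefore packaged into the quantitative versions of Minkowski's second theorem and of the Minkowski-reducedness inequality, both of which are standard.
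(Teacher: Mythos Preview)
Your argument is correct and is essentially the paper's own proof, just phrased in the dual-basis language rather than the distance-to-hyperplane language: your reciprocal vector $\mathbf{h}^{(j)}$ satisfies $|\mathbf{h}^{(j)}|=1/\mathrm{dist}(\mathbf{g}^{(j)},V_{(j)})$, so your identity $|\mathbf{h}^{(j)}|=\det(\mathsf{\Lambda}_{(j)})/\det(\mathsf{\Lambda})$ and your Cauchy--Schwarz step are exactly the paper's formula $\det(\mathsf{\Lambda})=\det(\mathsf{\Lambda}_j)\,\mathrm{dist}(\mathbf{g}^{(j)},V_j)$ and its observation $|\x|/|c_j|\ge\mathrm{dist}(\mathbf{g}^{(j)},V_j)$. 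Both proofs then feed in Hadamard for the sublattice, Minkowski's second theorem, and the comparability of the chosen basis to the successive minima, so the constant-tracking is identical up to the cosmetic $\sqrt{n}$ from switching between Euclidean and sup norms.
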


The constant $n^{2n}$ is certainly not optimal, but that is 
not important for us. 

\begin{proof}[Proof of Lemma \ref{lem:dav}]
The statement \eqref{every} clearly holds for any basis of
$\mathsf{\Lambda}$.  
For the remaining fact we appeal to Cassels' treatise on the  
geometry of numbers \cite{cassels}. This has the deficiency of
only  applying to lattices of full rank. 
Thus we content ourselves here with giving
a detailed proof when $k=n$, leaving to the reader
the necessary modifications required to handle $k<n$.

According to the corollary on page 222 of Cassels \cite{cassels}, if
the successive minima of $\mathsf{\Lambda}$ are
\[\lambda_1\le\dots\le\lambda_n,\]
then we may choose a basis 
$\mathbf{g}^{(1)},\ldots,\mathbf{g}^{(n)}$
of $\mathsf{\Lambda}$ so that
\[|\mathbf{g}^{(j)}|
\begin{cases} 
=\lambda_1, & \text{ if $j=1$},\\
\le \tfrac{1}{2}j\lambda_j, & \text{ if $j\ge 2$.}  
\end{cases}
\]
In particular we have $|\mathbf{g}^{(j)}| \le n\lambda_j$.
Let $\mathsf{\Lambda}_j$ be the $(n-1)$-dimensional lattice with basis 
 \[\mathbf{g}^{(1)},\ldots,\mathbf{g}^{(j-1)},\mathbf{g}^{(j+1)},
  \ldots,\mathbf{g}^{(n)},\]
  and let $V_j$ be the corresponding vector
  space over $\RR$. Then a consideration of the respective
  fundamental parallelepipeds shows that
  \[\det(\mathsf{\Lambda})=
  \det(\mathsf{\Lambda}_j){\rm dist}(\mathbf{g}^{(j)},V_j).\]
However
\[\det(\mathsf{\Lambda}_j)\le\prod_{\substack{i=1\\ i\not=j}}^n|\mathbf{g}^{(i)}|,\]
while
\[\det(\mathsf{\Lambda})\ge 2^{-n}{\rm Vol}_n\prod_{i=1}^n\lambda_i,\]
by Theorem V on page 218 of Cassels \cite{cassels}, where ${\rm Vol}_n$
is the volume of the unit ball in $\RR^n$.  By comparison with the
region $\sum|x_i|\le 1$ we have ${\rm Vol}_n\ge 2^n/n!\ge 2^nn^{-n}$. Thus
\[{\rm dist}(\mathbf{g}^{(j)},V_j)\ge
\frac{n^{-n}|\mathbf{g}^{(j)}|\lambda_1\ldots\lambda_n}
     {|\mathbf{g}^{(1)}|\ldots|\mathbf{g}^{(n)}|}\ge
     \frac{|\mathbf{g}^{(j)}|}{n^{2n}}.\]
However if $\x$ is represented as in (\ref{xr}), then
\[\frac{|\x|}{|c_j|}\ge{\rm dist}(\mathbf{g}^{(j)},V_j),\]
so that
\[|c_j|\le n^{2n}|\x|/|\mathbf{g}^{(j)}|,\]
  as claimed.
  \end{proof}

Using the previous lemma we now have the following.
\begin{lemma}\label{lattice}
  Let $V\subseteq\RR^4$ be a 3-dimensional vector space, and let
  $\mathsf{\Lambda}\subseteq V$ be a 3-dimensional lattice. Suppose that
   $E$ is an ellipsoid in $V$, centred on the origin.  Then there
  exists a basis $\mathbf{f}^{(1)},\mathbf{f}^{(2)},\mathbf{f}^{(3)}$
  of $\mathsf{\Lambda}$ and positive numbers $L_1,L_2,L_3$ with
  \[L_1L_2L_3\ll\frac{\meas(E)}{\det(\mathsf{\Lambda})},\]
  such that if one writes $\x\in\mathsf{\Lambda}\cap E$ as
  $\x=\sum_{j}\lambda_{j}\mathbf{f}^{(j)}$, then 
$|\lambda_{j}|\le L_j$.
\end{lemma}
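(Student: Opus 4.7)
My plan is to reduce the statement to Lemma \ref{lem:dav} by a linear change of variables that normalises the ellipsoid $E$ to the Euclidean unit ball in $\mathbb{R}^3$. Since $E$ is an ellipsoid centred at the origin in the 3-dimensional Euclidean space $V$ (equipped with the inner product inherited from $\mathbb{R}^4$), it has mutually orthogonal principal axes $\mathbf{u}^{(1)},\mathbf{u}^{(2)},\mathbf{u}^{(3)}$ of respective semi-axis lengths $a_1,a_2,a_3>0$. I shall define $T\colon V\to\mathbb{R}^3$ by $T(\sum_i y_i\mathbf{u}^{(i)})=(y_1/a_1,y_2/a_2,y_3/a_3)$, so that $T(E)$ is the Euclidean unit ball and $|\det T|=1/(a_1a_2a_3)$. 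Setting $\mathsf{\Lambda}'=T(\mathsf{\Lambda})$, one has $\det(\mathsf{\Lambda}')=\det(\mathsf{\Lambda})/(a_1a_2a_3)$ while $\meas(E)=(4\pi/3)a_1a_2a_3$; in particular the ratio $\meas(E)/\det(\mathsf{\Lambda})$ is affinely invariant up to the absolute constant $4\pi/3$, a fact I will exploit at the end.

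Next, I apply Lemma \ref{lem:dav} to $\mathsf{\Lambda}'\subseteq\mathbb{R}^3$ with $n=k=3$ to produce a basis $\mathbf{g}^{(1)},\mathbf{g}^{(2)},\mathbf{g}^{(3)}$ of $\mathsf{\Lambda}'$ satisfying $\prod_j|\mathbf{g}^{(j)}|\geq\det(\mathsf{\Lambda}')$, with the additional property that any representation $\y=\sum_jc_j\mathbf{g}^{(j)}$ obeys the coordinate bound $|c_j|\leq 3^{6}|\y|/|\mathbf{g}^{(j)}|$. For $\y\in T(E)$ the Euclidean estimate $|\y|_2\leq1$ gives $|\y|\leq1$ in the sup norm used by that lemma, so $|c_j|\leq L_j:=3^{6}/|\mathbf{g}^{(j)}|$. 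Pulling back, I set $\mathbf{f}^{(j)}:=T^{-1}(\mathbf{g}^{(j)})$, which is a basis of $\mathsf{\Lambda}$, and observe that writing $\x\in\mathsf{\Lambda}\cap E$ as $\x=\sum_j\lambda_j\mathbf{f}^{(j)}$ corresponds under $T$ to $T(\x)=\sum_j\lambda_j\mathbf{g}^{(j)}$ with $T(\x)\in T(E)$. The sup-norm bound therefore yields $|\lambda_j|\leq L_j$ as required.

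Combining $\prod_j|\mathbf{g}^{(j)}|\geq\det(\mathsf{\Lambda}')$ with the identity $1/\det(\mathsf{\Lambda}')\asymp\meas(E)/\det(\mathsf{\Lambda})$ from the first paragraph now delivers $\prod_jL_j\ll 1/\det(\mathsf{\Lambda}')\ll\meas(E)/\det(\mathsf{\Lambda})$, completing the argument. The overall proof is essentially bookkeeping once one realises that the problem is affine invariant and $E$ may be normalised before invoking the geometry of numbers; the only point requiring a little care is the passage from the sup-norm conclusion of Lemma \ref{lem:dav} to the Euclidean-ball domain $T(E)$, but this costs only a universal constant because the Euclidean and sup norms on $\mathbb{R}^3$ are equivalent.
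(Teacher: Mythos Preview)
Your proof is correct and follows essentially the same approach as the paper: normalise the ellipsoid to a unit ball by a linear map, apply Lemma~\ref{lem:dav} to the transformed lattice, and pull back. The only cosmetic difference is that the paper stays inside $\RR^4$ throughout (choosing $\mathbf{A}\in\mathrm{GL}_4(\RR)$ fixing $V$ and $V^\perp$, then invoking Lemma~\ref{lem:dav} with $n=4$, $k=3$ and constant $4^8$), whereas you first identify $V$ with $\RR^3$ via an orthonormal principal-axis basis and then apply the lemma with $n=k=3$ and constant $3^6$; the substance is identical.
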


\begin{proof}
  Let $\mathbf{e}\in\RR^4$ be a unit vector orthogonal to $V$, and let
  $\mathbf{A}\in \mathrm{GL}_4(\RR)$ 
  be chosen to fix 
  $\mathbf{e}$    and $V$, and to map
  $E$ to the unit 3-dimensional ball in $V$.
Thus
\beql{R3}
1\ll |\det(\mathbf{A})|\meas(E)\ll 1.
\eeq
Moreover $\mathbf{A}\mathsf{\Lambda}$ is a lattice of determinant
$|\det(\mathbf{A})|\det(\mathsf{\Lambda})$. We now wish to apply
Lemma \ref{lem:dav} to the 
3-dimensional lattice $\mathbf{A}\mathsf{\Lambda}$
in $\RR^4$. According to the
lemma we see that there is a basis
$\mathbf{g}^{(1)},\mathbf{g}^{(2)},\mathbf{g}^{(3)}$ such that, if
$\y=\sum_{j}\lambda_{j}\mathbf{g}^{(j)}$ then $|\lambda_j|\le
L_j|\y|$, with $L_j=4^8/|\mathbf{g}^{(j)}|$ for $1\le j\le
3$. In particular, if $\y$ is in the unit ball, then $|\lambda_j|\le L_j$.

We also see that the values  $L_j$ satisfy
\[L_1L_2L_3\ll\prod_{j=1}^3 |\mathbf{g}^{(j)}|^{-1}\ll
\det(\mathbf{A}\mathsf{\Lambda})^{-1}\ll
\frac{\meas(E)}{\det(\mathsf{\Lambda})},\]
by (\ref{every}) and (\ref{R3}).

Since $\mathbf{g}^{(j)}\in\mathbf{A}\mathsf{\Lambda}$ we may write
$\mathbf{g}^{(j)}=\mathbf{A}\mathbf{f}^{(j)}$, with
$\mathbf{f}^{(j)}\in\mathsf{\Lambda}$. Indeed we see that
$\mathbf{f}^{(1)},\mathbf{f}^{(2)},\mathbf{f}^{(3)}$ form a basis
of $\mathsf{\Lambda}$.  Moreover, if
$\x=\sum_{j}\lambda_{j}\mathbf{f}^{(j)}$, we find that
$\mathbf{A}\x=\sum_{j}\lambda_{j}\mathbf{g}^{(j)}$. When $\x\in E$ the
vector $\y=\mathbf{A}\x$ will lie in the unit ball, and we may
conclude that $|\lambda_j|\le L_j$, as required.
\end{proof}

\subsection{Conics}

Our treatment of the cardinality in \eqref{eq:step1} relies on a
general estimate for the number of rational points of bounded height
on conics.   

The first ingredient in this is the following result.
\begin{lemma}\label{V3}
Let $q(x_1,x_2,x_3)$ be a non-singular integral quadratic form.  Let
$L_1, L_2,L_3>0$.  Then
there are $O(1+(L_1L_2L_3)^{1/3})$ primitive integer solutions to 
$q(x_1,x_2,x_3)=0$ satisfying $|x_i|\le L_i$ for $1\le i\le 3$.
  \end{lemma}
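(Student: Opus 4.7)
My plan is to parameterise rational points on the conic via $\PP^1$ and then bound the count using a lattice-point estimate for binary forms.

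If $q=0$ has no primitive integer zero the count is trivially $0$, so assume there is at least one primitive zero $\x^{(0)}$. Projection from $\x^{(0)}$ realises a birational equivalence $\PP^1\to\{q=0\}$, so every rational point of the conic can be written uniquely as $\x=(F_1(s,t):F_2(s,t):F_3(s,t))$ for integral binary quadratic forms $F_i\in\ZZ[s,t]$ constructed from $q$ and $\x^{(0)}$. Setting $h(s,t):=\gcd(F_1(s,t),F_2(s,t),F_3(s,t))$, the map $(s,t)\mapsto \mathbf{F}(s,t)/h(s,t)$ provides a bijection (up to sign) between primitive pairs $(s,t)\in\ZZ^2$ and primitive integer zeros of $q$.

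The box constraints $|x_i|\le L_i$ translate into $|F_i(s,t)|\le L_i h(s,t)$ for $i=1,2,3$; taking the product gives
\[
|F_1(s,t)F_2(s,t)F_3(s,t)|\le L_1L_2L_3\cdot h(s,t)^3.
\]
I would then count primitive $(s,t)\in\ZZ^2$ satisfying this constraint. Since $F_1F_2F_3$ is a binary sextic, the sublevel set $\{|F_1F_2F_3|\le M\}\subset \RR^2$ has area $\ll M^{1/3}$ and contains $O(1+M^{1/3})$ integer points; once the $h(s,t)^3$ factor is handled, this recovers the target bound $O(1+(L_1L_2L_3)^{1/3})$.

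The main obstacle is making the lattice-point estimate uniform in $q$, since the forms $F_i$ inherit the coefficients of $q$, and the variable gcd $h(s,t)$ complicates the sublevel-set picture. To handle the $h$-factor I would decompose the count over $k:=h(s,t)$: the condition $k\mid F_i(s,t)$ for all $i$ restricts $(s,t)$ to lie in congruence classes modulo $k$ (cutting the lattice density by roughly $k^2$), and for primitive $(s,t)$ the value $k$ must divide the resultants of the $F_i$, so only finitely many $k$ contribute meaningfully. To obtain constants independent of $q$ I would either first normalise $F_1F_2F_3$ through a linear change of $(s,t)$-variables (absorbing the distortion into the area bound) or appeal to a standard uniform lattice-point lemma for binary forms of fixed degree, in which the implied constants depend only on the degree.
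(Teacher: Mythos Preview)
The paper does not prove this lemma in situ: it quotes Lemma~6 of \cite{n-2} directly, adding only the remark that if some $L_i<1$ the relevant points lie on a line and hence number at most two. There is therefore no detailed argument in the paper itself to compare against.

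Your approach, however, has a genuine gap at the step where the three constraints $|F_i(s,t)|\le L_i\,h(s,t)$ are multiplied together. The claim that the sublevel set $\{(s,t):|F_1F_2F_3|\le M\}$ has area $\ll M^{1/3}$ and contains $O(1+M^{1/3})$ integer points is false in general, because $F_1F_2F_3$ can have repeated linear factors. Take the conic $x_1^2=x_2x_3$, parameterised as $(st:s^2:t^2)$; here $F_1F_2F_3=s^3t^3$, and for coprime $(s,t)$ one has $h(s,t)=1$. With $L_1=L_2=L_3=N$ your product inequality becomes $|st|\le N$, whose real solution set has infinite area and which admits $\asymp N\log N$ coprime integer solutions --- already exceeding the target $O(N)$. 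The individual constraints $s^2\le N$ and $t^2\le N$ that were discarded on taking the product are precisely what brings the count back down. Neither of your proposed fixes addresses this: a linear change of the $(s,t)$-variables cannot separate repeated roots of $F_1F_2F_3$ (such repetitions occur whenever a coordinate hyperplane is tangent to the conic, independently of the basepoint chosen), and there is no ``standard uniform lattice-point lemma'' yielding $O(1+M^{1/3})$ points for a binary sextic that is allowed to degenerate in this way.

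A parameterisation argument \emph{can} be made to work, but one must retain all three inequalities $|F_i(s,t)|\le L_i h$ simultaneously rather than collapsing them to their product, and one must also control the coefficients of the $F_i$ --- typically by taking $\x^{(0)}$ to be a point of smallest height --- so that the resulting region in the $(s,t)$-plane is bounded uniformly in $q$. Without these refinements the outline does not close.
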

This is basically Lemma 6 of the authors' paper \cite{n-2}, in which
one assumes that the $L_i$ are all at least 1. When $L_3<1$, say,
the points are restricted to a line so that there are at most two
primitive solutions.

For the second ingredient, let $q$ be a 
non-singular ternary quadratic form defined over $\ZZ$ as above,
with discriminant $\Delta_q$. 
For any prime $p$ we let $\bar q$ denote the reduction of $q$ modulo $p$.
We define a completely multiplicative function 
$\chi_q: \NN\to \{0,\pm 1\}$, via 
$$
\chi_q(p)=\begin{cases}
+1, & \text{ if $\rank \bar q=2$ and $\bar q$ is reducible over $\FF_p$,} \\
-1, & \text{ if $\rank \bar q=2$ and $\bar q$ is irreducible over $\FF_p$,} \\
0, & \text{ if $\rank \bar q\not= 2$.} 
\end{cases}
$$
For any non-zero integer $M$, let $M^\square=\prod_{p^e\| M, e\geq 2}p^e$
denote the (positive)
square-full part of $M$ (so that $\Dbad= \Delta_Q^\square$, 
for example).  With this notation
the following result draws together a number of arguments that appear
in the literature and has the advantage of automatically detecting
when the quadratic form is isotropic over $\QQ$. 

\begin{lemma}\label{two}
Let $q$ be a non-singular ternary quadratic form over $\ZZ$ with matrix
$\mathbf{A}$.  Let  $\Delta_q=\det \mathbf{A}$ and let 
 $D(q)$ be the  highest
common factor of the $2\times 2$ minors of $\mathbf{A}$. Then there are
lattices $\mathsf{\Lambda}_i$ for $1\le i\le I$ such that
\[\left\{\y \in \ZZp^3: q(\y)=0\right\}
\subseteq\bigcup_{i=1}^I\mathsf{\Lambda}_i.\]
Moreover we have
\beql{NL}
\det(\mathsf{\Lambda}_i)\gg \frac{|\Delta_q|}{(D(q)^\square)^{3/2}}
\eeq 
  for all $i$, and $I\ll C(q)$,   where
\[ C(q)=\prod_{\substack{p^\xi \| \Delta_q\\ \text{$p \mid  2 D(q)$}}}
\tau(p^\xi ) \prod_{\substack{p^\xi  \| \Delta_q\\ p\nmid 2D(q)}}
\left\{\sum_{k=0}^\xi \chi_q(p^k)\right\}.\]
In particular it may happen that $C(q)=0$, in which case
$q(\y)=0$ has no solutions in $\ZZp^3$.
\end{lemma}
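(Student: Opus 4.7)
The overall strategy is a local-to-global construction: for each prime $p\mid\Delta_q$, I would exhibit a small family of $\ZZ_p$-sublattices of $\ZZ_p^3$ whose union contains every primitive zero of $q$ over $\ZZ_p$, and then assemble the global lattices $\mathsf{\Lambda}_i$ by intersecting one local sublattice at each bad prime, using the Chinese Remainder Theorem.

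Fix $p\mid\Delta_q$ and write $p^\xi\|\Delta_q$. I would first bring $q$ into Jordan normal form over $\ZZ_p$ (diagonal for odd $p$, via the standard blocks for $p=2$) as
\[u_1 p^{a_1}y_1^2+u_2 p^{a_2}y_2^2+u_3 p^{a_3}y_3^2\]
with $u_i\in\ZZ_p^\times$ and $0\le a_1\le a_2\le a_3$. A direct computation of the $2\times 2$ minors of the associated matrix gives $a_1+a_2=\val_p(D(q))$ and $a_1+a_2+a_3=\xi$. The task then splits into two cases.

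When $p\nmid 2D(q)$ one has $a_1=a_2=0$ and $a_3=\xi$, so primitive solutions reduce to primitive solutions of $u_1 y_1^2+u_2 y_2^2\equiv 0\pmod{p^\xi}$ (with $y_3$ free). Stratify by $k=\min(\val_p(y_1),\val_p(y_2))$, which forces $2k\le\xi$. After dividing by $p^{2k}$, a primitive residue class mod $p^{\xi-2k}$ exists iff either $\xi-2k=0$ (yielding one lattice) or $-u_1/u_2$ is a square in $\FF_p$, i.e.\ $\chi_q(p)=+1$ (yielding two lattices via Hensel); if $\chi_q(p)=-1$ and $\xi-2k\ge 1$ there are none. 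Summing over admissible $k$ gives exactly $\sum_{k=0}^{\xi}\chi_q(p^k)$ sublattices, each of index $p^\xi$ in $\ZZ_p^3$. When $p\mid 2D(q)$ I would instead stratify coarsely by the admissible valuation triples $(\val_p(y_1),\val_p(y_2),\val_p(y_3))$; compatibility with the Jordan form confines these to $O(\tau(p^\xi))$ strata, each determining a sublattice. A direct calculation then shows that the index of such a sublattice is at least $p^{\xi-3\val_p(D(q)^\square)/2}$, the loss coming precisely from the square-full part of the local discriminant of the form $D(q)$.

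To glue, I would pick one local sublattice at each $p\mid\Delta_q$ and intersect via CRT. The number of global lattices $\mathsf{\Lambda}_i$ so obtained is at most the product of local counts, which by construction is exactly $C(q)$, and the determinant of each is the product of local indices, giving $\det(\mathsf{\Lambda}_i)\gg |\Delta_q|/(D(q)^\square)^{3/2}$ as claimed. When $\chi_q(p)=-1$ at some prime with $\val_p(\Delta_q)$ odd, the local sum vanishes and $C(q)=0$; this is consistent, since then $q$ already has no primitive zero over $\ZZ_p$.

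The main technical obstacle is the second case, especially at $p=2$ where the Jordan decomposition involves binary blocks and one cannot simply diagonalise, and at primes where $q$ is highly degenerate so that $\val_p(D(q))$ is large. In these situations one must track very carefully both the index lower bound and the count of strata, and verify that primitivity of $(y_1,y_2,y_3)$ in the original coordinates is preserved under the local change of basis; balancing these is what produces exactly the factors $\tau(p^\xi)$ and $(D(q)^\square)^{3/2}$ in the statement.
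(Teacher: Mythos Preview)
Your proposal is correct and follows essentially the same approach as the paper: local diagonalisation at each prime dividing $\Delta_q$, case analysis according to whether $p\mid 2D(q)$ or not, and Chinese Remainder gluing to assemble the global lattices. The paper's own proof is briefer only because it cites existing results (Browning--Sofos \cite{dp4-fibration} and Heath-Brown \cite{cubic}) for the basic construction giving $\tau(p^\xi)$ lattices of determinant $p^{\xi-\lfloor 3\beta/2\rfloor}$, and then spells out precisely the refinements at primes with $\beta=\val_p(D(q))\in\{0,1\}$ that you outline in your first case, which is what upgrades $D(q)$ to $D(q)^\square$ and introduces the factor $\sum_{k=0}^\xi\chi_q(p^k)$.
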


\begin{proof}
A statement of this sort follows from 
\cite[Lemma ~2.4]{dp4-fibration}
except that one would have $D(q)$ in place $D(q)^\square$ in \eqref{NL}.
To show that the dependence on $D(q)$ can be weakened in
the way that is claimed here one merely applies the argument used in
\cite[Lemma 5]{qpub}. We briefly recall the necessary modifications
for completeness.  
Following  the  treatment in
\cite[Cor.~2]{n-2} and 
 \cite[Thm.~2]{cubic}, the idea is to consider the congruence
 conditions imposed on primitive integer solutions to $q(\y)=0$, in
 order to show that the solutions in which we are interested lie on a
 small number  
of lattices with large determinant.  Suppose that $p^\beta\| D(q)$ and
$p^\xi\| \Delta_q$ with $0\leq \beta\leq\xi$.  
According to the proof of  \cite[Thm.~2]{cubic}, 
the points in which we are interested lie on a union of at most
$c_p^{(1)} \tau(p^\xi)$ lattices, each of determinant  
 $c_p^{(2)}p^{\xi-[3\beta/2]}$, for absolute constants $c_p^{(i)}$ such that
$c_p^{(i)}=1$ for $p>2$.
This is satisfactory for $p=2$, and also when $p>2$ and $\beta\ge 2$ 
so that we only need to refine
the statement when  $p>2$ and $\beta\leq 1$. 

On diagonalising $q$ over the ring $\ZZ/p^\xi \ZZ$ we may suppose
without loss of generality that  $\y\in \ZZp^3$ satisfies the
congruence 
\begin{equation}\label{eq:congruence}
 Ay_1^2+p^\beta By_2^2+p^\gamma C y_3^2\equiv 0\bmod{p^\xi},
\end{equation}
for  $A,B,C\in \ZZ$ such that $p\nmid ABC$,  and 
where $\beta\leq \gamma$ and $\beta+\gamma=\xi$. 
Suppose first that $\beta=0$ and note that 
$\chi_q(p)=(\frac{-AB}{p})$. If $\chi_q(p)=1$ we don't need to do
anything new. If  
$\chi_q(p)=-1$, on the other hand,  we easily see there are no
primitive integer solutions if $2\nmid \xi$, while if $2\mid \xi$ the
points lie on a unique lattice of determinant $p^\xi.$ 
Suppose next that $\beta=1$, so that $\gamma=\xi-1$.
We claim that the points in which we are interested in lie on one of
at most $2$ lattices, each of determinant $p^{\xi}$. Suppose that
$\xi=2k$ is even, with $k\geq 1$. 
Then the  congruence \eqref{eq:congruence} can be used to deduce that 
$p^{k}\mid y_1$ and $p^{k-1}\mid y_2$. A change of variables then leads to a 
congruence of the form
$
Bz_2^2+ C z_3^2\equiv 0\bmod{p},
$
This final congruence forces $\y$ to lie on a union of at most $2$ lattices,  
each of determinant $p^{k}\cdot p^{k-1}\cdot p=p^\xi$. The case in
which $\xi$ is odd is similar. 
\end{proof}
\medskip

Let us now consider the effect of this in \eqref{eq:step1}. The
integer points on $\x.\c=0$ form a 3-dimensional lattice
$\mathsf{\Lambda}_{\c}\subset\ZZ^4$ say, whose
determinant is $\|\c\|_2=\sqrt{c_1^2+\dots+c_4^2}$.  We choose
$\mathbf{e}^{(1)},\mathbf{e}^{(2)},\mathbf{e}^{(3)}$ as a basis for
the lattice and set
\[q(\y)=Q_{\c}(\y)=
Q(y_1\mathbf{e}^{(1)}+y_2\mathbf{e}^{(2)}+y_3\mathbf{e}^{(3)}).\]
If we suppose that $Q$ has underlying symmetric matrix $\mathbf{M}$,
then $Q_\c$ clearly has underlying $3\times 3$ matrix 
\begin{equation}\label{five}
\mathbf{M}_\mathbf{c}= \mathbf{E}^t \mathbf{M} \mathbf{E},
\end{equation}
where
$\mathbf{E}$ is  the $4 \times 3$ matrix with columns 
$\mathbf{e}^{(1)},\mathbf{e}^{(2)},\mathbf{e}^{(3)}$.
The following result is a generalisation of \cite[Eq.~(20)]{qpub},
which only deals  
with diagonal  forms $Q$.

\begin{lemma}\label{lem:disc}
We have $\det \mathbf{M}_\c=Q^*(\c)$, where 
$Q^*$ is the dual form.
\end{lemma}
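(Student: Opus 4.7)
The plan is to reduce $\det\mathbf{M}_\c$ to a product of two factors: one coming from the covolume of $\mathsf{\Lambda}_\c$ inside $\c^\perp$, and one coming from the restriction of $Q$ to $\c^\perp$, which has already been computed during the proof of Lemma \ref{Q*B}.

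Concretely, let $\be_1,\be_2,\be_3,\be_4$ be the orthonormal basis of $\RR^4$ used in \eqref{mx}, with $\be_4=\c/\|\c\|_2$, and set $\mathbf{U}'=(\be_1\mid\be_2\mid\be_3)\in \RR^{4\times 3}$. Both the columns of $\mathbf{E}$ and the columns of $\mathbf{U}'$ form bases of the hyperplane $\c^\perp\subset\RR^4$, so we can write $\mathbf{E}=\mathbf{U}'\mathbf{P}$ for a unique $3\times 3$ real matrix $\mathbf{P}$. Since $\mathbf{U}'$ is an isometry from $\RR^3$ onto $\c^\perp$, the quantity $|\det \mathbf{P}|$ equals the covolume of $\mathsf{\Lambda}_\c$ inside $\c^\perp$. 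By primitivity of $\c$ this covolume is $\|\c\|_2$, as recorded by the authors immediately before the lemma. Hence $(\det \mathbf{P})^2=\|\c\|_2^2$.

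On the other hand, the top-left $3\times 3$ block of the matrix in \eqref{mx} is $\mathbf{U}'^t\mathbf{M}\mathbf{U}'=\Diag(\mu_1,\mu_2,\mu_3)$, so its determinant is $\mu_1\mu_2\mu_3$. Equation \eqref{B2}, already established in the proof of Lemma \ref{Q*B}, identifies this quantity as $Q^*(\be_4)$, which by homogeneity equals $Q^*(\c)/\|\c\|_2^2$.

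Combining the two pieces,
\[\det \mathbf{M}_\c=\det(\mathbf{P}^t\mathbf{U}'^t\mathbf{M}\mathbf{U}'\mathbf{P})=(\det \mathbf{P})^2\det(\mathbf{U}'^t\mathbf{M}\mathbf{U}')=\|\c\|_2^2\cdot\frac{Q^*(\c)}{\|\c\|_2^2}=Q^*(\c),\]
as required. There is really no serious obstacle: the two mildly delicate points are keeping the sup-norm $|\c|$ and the Euclidean norm $\|\c\|_2$ straight (so that the two appearances of $\|\c\|_2^2$ cancel cleanly), and reusing \eqref{B2} from the earlier proof rather than re-deriving it.
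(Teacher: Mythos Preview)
Your proof is correct, but it follows a genuinely different route from the paper's. The paper argues directly via the Cauchy--Binet formula: it first shows that the vector $\mathbf{d}$ of signed $3\times 3$ minors of $\mathbf{E}$ is a primitive integer vector orthogonal to each $\ee^{(i)}$, hence $\mathbf{d}=\pm\c$; then Cauchy--Binet gives
\[
\det\mathbf{M}_\c=\sum_{i,j}(-1)^{i+j}\det(\mathbf{E}_i)\det(\mathbf{M}_{i,j})\det(\mathbf{E}_j)=\mathbf{d}^t\mathbf{M}^{\mathrm{adj}}\mathbf{d}=Q^*(\c).
\]
Your argument instead factors $\mathbf{E}=\mathbf{U}'\mathbf{P}$ through the orthonormal frame of Lemma~\ref{Q*B}, so that $\det\mathbf{M}_\c$ splits as a covolume factor $(\det\mathbf{P})^2=\|\c\|_2^2$ times $\mu_1\mu_2\mu_3$, and then you invoke \eqref{B2} to identify $\mu_1\mu_2\mu_3=Q^*(\be_4)=Q^*(\c)/\|\c\|_2^2$. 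This is slicker and makes the geometry transparent (lattice covolume times intrinsic determinant), at the cost of relying on \eqref{B2}, which was proved inside another lemma under the standing hypothesis $Q^*(\c)\neq 0$; you might note that the derivation of \eqref{B2} is purely algebraic and does not use that hypothesis, so the identity holds for all primitive $\c$. The paper's Cauchy--Binet approach is self-contained and independent of the earlier section.
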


\begin{proof}
We let $\mathbf{E}_i$ denote the square  matrix obtained by deleting the $i$th
row from $\mathbf{E}$, for $1\leq i\leq 4$. Put 
$$
\mathbf{d}=(-\det \mathbf{E}_1, \det \mathbf{E}_2, -\det
\mathbf{E}_3,\det \mathbf{E}_4) 
$$
and let   $i\in \{1,2,3,4\}$.  Since the $4\times 4$ matrix with
columns $\mathbf{e}^{(i)},\mathbf{e}^{(1)},\dots, \mathbf{e}^{(3)}$
has determinant $0$,  
 it follows that  $\mathbf{d}.\mathbf{e}^{(i)}=0$.  But this implies that 
$\mathbf{d}$ belongs to the dual of $\mathsf{\Lambda}$, in the notation 
of Lemma \ref{lattice}, which is equal to 
$\langle \c \rangle_\ZZ$.  
Now   $\ma d$ is clearly non-zero, since   $\rank \ma E=3$.
Moreover, $\ma d$  is primitive since 
 it would otherwise follow that there is a prime $p$ for which 
the vectors  $\mathbf{e}^{(i)}$ are
linearly dependent modulo $p$,   contradicting the fact that they
extend to a basis of $\ZZ^4$.
Hence we have shown that $\ma d =\pm \c$.

To calculate $\det \ma M_\c$ we invoke the Cauchy--Binet formula.
It follows from \eqref{five} that 
\begin{align*}
\det \M_\c 
&=\sum_{i=1}^4 \det(\ma E_i^t) \det (\ma M_i \ma E)
=\sum_{i,j=1}^4 \det(\ma E_i)\det (\ma M_{i,j}) \det (\ma E_j),
\end{align*}
where $\ma M_i$ is the $3\times 4$ matrix obtained by deleting the
$i$th row from $\ma M$ and $\ma M_{i,j}$ is the square matrix
obtained by further deleting the $j$th column. The lemma now follows
on observing that  
$\det (\ma M_{i,j})=(-1)^{i+j}(\ma M^{\text{adj}})_{i,j} $ and recalling that 
$\ma d=\pm \c$.
\end{proof}
\medskip

To apply Lemma \ref{two} we will also need to understand $D(q)^\square$
and $\chi_q$ for $q=Q_{\c}$. If $\ee^{(1)}, \ee^{(2)}, \ee^{(3)}$ are
a basis for $\mathsf{\Lambda}_{\c}$, as before, we may extend to a basis
of $\ZZ^4$ by adding $\ee^{(4)}$, say. There are therefore
integers $a,b,c,d$ such that 
\begin{equation}\label{eq:shape}
Q(z_1 \ee^{(1)}+ \dots +z_4 \ee^{(4)})=Q_\c(z_1,z_2,z_3)+z_4(az_1 +
bz_2+cz_3+dz_4).
\end{equation}
The left hand side is a quaternary quadratic form of
discriminant 
$\Delta_Q$, since the $4\times 4$ matrix with columns 
$\ee^{(1)}, \dots , \ee^{(4)}$ has determinant $\pm 1$.
For any odd prime $p$ and any positive integer $\xi$
we may apply a unimodular transformation 
to the variables $z_1,z_2,z_3$ in order to 
diagonalize $Q_\c$ modulo $p^\xi$. In this way, 
we may assume  that 
$Q_\c$ has underlying matrix $\Diag(A,B,C)$, with $v_p(A)\le
v_p(B)\le v_p(C)$. 
In particular, if $p\mid Q^*(\c)$ then $p\mid\det(Q_{\c})$ 
and hence $p\mid C$. 
It follows from \eqref{eq:shape} that 
$$
4\Delta_Q\equiv -a^2BC-b^2AC-c^2AB+4dABC \bmod{p^\xi}.
$$
Thus if $p\nmid \Delta_Q$ and $p \mid Q^*(\c)$ then
$$
\chi_{Q_\c}(p)=
\left(\frac{-AB}{p}\right)=
\left(\frac{\Delta_Q}{p}\right).
$$
Next, if 
$p^v\| D(Q_{\c}) 
$ then, taking $\xi=v$, we see that
$p^v\mid\Delta_Q$.  
When $p=2$, one may diagonalize $4Q_\c$  using an integer matrix of
determinant $2$. 
Arguing as above one then finds that if $2^v\| D(Q_{\c})$ then
$2^v\mid 2^8\Delta_Q$. 
Once combined with our treatment of the odd primes, 
this yields
$D(Q_\c) \mid 2^8\Delta_Q$. On the other hand, it is clear that
$D(q)^3\mid \det(\mathbf{A}^{{\rm adj}})$, whence
$D(q)^3\mid \Delta_q^2$. It follows that we also have
$D(Q_\c)^3\mid Q^*(\c)^2$.  Thus $D(Q_\c)^3$ divides
$2^{24}(\Delta_Q^3,Q^*(\c)^2)$, so that
\[D(Q_\c)^\square \ll  (\Dbad^3,Q^*(\c)^2)^{1/3}. \]
It therefore follows from Lemma
\ref{lem:disc} that the lattices in Lemma \ref{two} satisfy
\beql{LLB}
\det(\mathsf{\Lambda}_i)\gg \frac{|Q^*(\c)|}{(\Dbad^3,Q^*(\c)^2)^{1/2}}.
\eeq
when $q=Q_{\c}$.

According to Lemma \ref{two}, if $Q_{\c}(\y)=0$ then $\y$ must belong
to one of the lattices $\mathsf{\Lambda}_i$.
We write
\[\widehat{\mathsf{\Lambda}}_i=\{y_1\mathbf{e}^{(1)}+y_2\mathbf{e}^{(2)}
+y_3\mathbf{e}^{(3)}:\, \y\in\mathsf{\Lambda}_i\},\]
where $\mathbf{e}^{(1)},\mathbf{e}^{(2)},\mathbf{e}^{(3)}$ are a basis for
$\mathsf{\Lambda}_{\c}$ as before.
Thus $\widehat{\mathsf{\Lambda}}_i$ is a 3-dimensional lattice in
$\ZZ^4$.  Moreover, if $\x$ is an integer solution of
$Q(\x)=\c.\x=0$,
then $\x\in\widehat{\mathsf{\Lambda}}_i$ for some index $i$.
We proceed to compute the determinants of these lattices.
\begin{lemma}
  We have
  \[\det(\widehat{\mathsf{\Lambda}}_i)=\|\c\|_2\det(\mathsf{\Lambda}_i)
  \gg \frac{\|\c\|_2.|Q^*(\c)|}{(\Dbad^3,Q^*(\c)^2)^{1/2}}.\]
\end{lemma}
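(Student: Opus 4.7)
The plan is to view the lattice $\widehat{\mathsf{\Lambda}}_i$ as the image of $\mathsf{\Lambda}_i$ under the $\ZZ$-linear embedding $\mathbf{E}\colon\ZZ^3\to\ZZ^4$ sending $\y$ to $y_1\ee^{(1)}+y_2\ee^{(2)}+y_3\ee^{(3)}$, where (slightly overloading notation) $\mathbf{E}$ is the $4\times 3$ matrix with columns $\ee^{(1)},\ee^{(2)},\ee^{(3)}$. Because these columns are $\ZZ$-linearly independent, $\mathbf{E}$ is an injective lattice homomorphism, so any $\ZZ$-basis $\mathbf{f}^{(1)},\mathbf{f}^{(2)},\mathbf{f}^{(3)}$ of $\mathsf{\Lambda}_i$ maps to a $\ZZ$-basis $\mathbf{E}\mathbf{f}^{(1)},\mathbf{E}\mathbf{f}^{(2)},\mathbf{E}\mathbf{f}^{(3)}$ of $\widehat{\mathsf{\Lambda}}_i$.

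Let $\mathbf{F}$ be the $3\times 3$ matrix with columns $\mathbf{f}^{(j)}$. The covolume of $\widehat{\mathsf{\Lambda}}_i$ is then obtained from its Gram matrix:
\[
\det(\widehat{\mathsf{\Lambda}}_i)^2=\det\bigl((\mathbf{E}\mathbf{F})^t(\mathbf{E}\mathbf{F})\bigr)
=\det(\mathbf{F})^2\det(\mathbf{E}^t\mathbf{E}).
\]
Here $|\det(\mathbf{F})|=\det(\mathsf{\Lambda}_i)$ by definition. Moreover $\mathsf{\Lambda}_{\c}$ is itself the $\ZZ$-lattice spanned by $\ee^{(1)},\ee^{(2)},\ee^{(3)}$, so its Gram matrix is $\mathbf{E}^t\mathbf{E}$ and its covolume is $\sqrt{\det(\mathbf{E}^t\mathbf{E})}$. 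But the text has already observed that this covolume equals $\|\c\|_2$, the standard fact that the rank-$3$ sublattice of $\ZZ^4$ orthogonal to a primitive vector $\c$ has determinant $|\c|_2$. Taking square roots yields
\[
\det(\widehat{\mathsf{\Lambda}}_i)=\|\c\|_2\,\det(\mathsf{\Lambda}_i).
\]

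The stated lower bound now follows at once by substituting \eqref{LLB}. There is really no obstacle: the content is entirely the multiplicativity of lattice covolume under composition of $\ZZ$-linear maps, combined with the explicit covolume of $\mathsf{\Lambda}_{\c}$ and the previously-established lower bound on $\det(\mathsf{\Lambda}_i)$.
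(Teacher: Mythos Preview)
Your proof is correct and follows essentially the same route as the paper: both compute the Gram matrix of $\widehat{\mathsf{\Lambda}}_i$ as $(\mathbf{E}\mathbf{F})^t(\mathbf{E}\mathbf{F})$ (the paper writes $\mathbf{H}$ for your $\mathbf{F}$), factor its determinant as $\det(\mathbf{F})^2\det(\mathbf{E}^t\mathbf{E})$, identify $\det(\mathbf{E}^t\mathbf{E})=\|\c\|_2^2$ via the covolume of $\mathsf{\Lambda}_{\c}$, and then invoke \eqref{LLB}.
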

  
\begin{proof}
If $\mathsf{\Lambda}_i\subset\ZZ^3$ has a basis $\mathbf{h}^{(1)},
\mathbf{h}^{(2)}, \mathbf{h}^{(3)}$, then
\[\det(\mathsf{\Lambda}_i)=|\det(\mathbf{H})|,\]
where $\mathbf{H}$ is the $3\times 3$ matrix with columns
$\mathbf{h}^{(1)}, \mathbf{h}^{(2)}, \mathbf{h}^{(3)}$. Moreover if 
$\mathbf{E}$ is the $4\times 3$ matrix with columns
$\mathbf{e}^{(1)}, \mathbf{e}^{(2)}, \mathbf{e}^{(3)}$, then 
$\widehat{\mathsf{\Lambda}}_i$ will have a basis consisting of the
columns of $\mathbf{E}\mathbf{H}$. It then follows that
\[\left(\det(\widehat{\mathsf{\Lambda}}_i)\right)^2=
\det(\mathbf{H}^T\mathbf{E}^T\mathbf{E}\mathbf{H}).\]
Since $\mathbf{H}$ and $\mathbf{E}^T\mathbf{E}$ are both $3\times 3$
matrices, and
\[\det(\mathbf{E}^T\mathbf{E})=\left(\det(\mathsf{\Lambda}_{\c})\right)^2
=\|\c\|_2^2,\]
we deduce that
\[\left(\det(\widehat{\mathsf{\Lambda}}_i)\right)^2=\|\c\|_2^2\det(\mathbf{H})^2
=\|\c\|_2^2\left(\det(\mathsf{\Lambda}_i)\right)^2.\]
Thus
$\det(\widehat{\mathsf{\Lambda}}_i)=\|\c\|_2\det(\mathsf{\Lambda}_i).$
The result now follows via (\ref{LLB}).
\end{proof}

We now have to consider primitive integer vectors $\x$ which lie in one of
the lattices $\widehat{\mathsf{\Lambda}}_i$, as well as being in one
of the ellipsoids $E_j$ of Lemma \ref{Q*B}.  We can therefore
use Lemma \ref{lattice} with $V=\{\x\in\RR^4:\x.\c=0\}$
to deduce that, for each index $i$, and each
ellipsoid $E_j$, the relevant values of $\x$ take the form
$\sum_k\lambda_k\mathbf{f}^{(k)}$, with $|\lambda_k|\le L_k$, and
\begin{align*}
L_1L_2L_3&\ll \frac{|Q^*(\c)|.\|Q\|^3 B^3}{|\c|^2|\Delta_Q|^{3/2}}
\frac{(\Dbad^3,Q^*(\c)^2)^{1/2}}{|\c|.|Q^*(\c)|}\nonumber\\
&=
\left(\frac{\|Q\|B{(\Dbad^3,Q^*(\c)^2)^{1/6}}}{|\c|.|\Delta_Q|^{1/2}}\right)^3.
\end{align*}
We remark at this point that one can alternatively give a bound
\[\ll \frac{B^3\Dbad^{3/2}}{|\c|.|Q^*(\c)|},\]
which can be superior in certain circumstances.  However the factor
$Q^*(\c)$ in the denominator is rather inconvenient.

We now apply Lemma \ref{V3} to show that there are
\[\ll
1+\frac{\|Q\|B(\Dbad^3,Q^*(\c)^2)^{1/6}}{|\c|.|\Delta_Q|^{1/2}}\]
primitive solutions, for each lattice $\widehat{\mathsf{\Lambda}}_i$ and
each ellipsoid $E_j$. It transpires that the highest common factor
term is in a rather awkward shape, because it involves the square of
$Q^*(\c)$.  We shall replace it with a weaker upper bound, which is chosen
in such a way that  it will eventually cancel with extra factors that
come into play in the next section. First note that  if $m$ and $n$ are
non-zero integers, and $h=(m,n)$, then
\[(m,n^2)^{1/6}\le\frac{m^{1/12}h}{(m,h^4)^{1/4}}.\]
This is easily proved, by considering the case in which $m$ and $n$
are powers of a single prime. Taking $m=\Dbad^3$ and $n=Q^*(\c)$ we
deduce that
\[(\Dbad^3,Q^*(\c)^2)^{1/6}\le\Dbad^{1/4}\frac{h}{(\Dbad^3,h^4)^{1/4}},\]
with $h=(\Dbad^3,Q^*(\c))$.
We therefore have the following conclusion.

\begin{lemma}\label{NewB}
Let
\begin{equation}\label{eq:R}
R(N)=\prod_{\substack{p^\xi \| N\\ \text{$p \mid  2 \Delta_Q$}}}
\tau(p^\xi )\prod_{\substack{p^\xi  \| N\\  p\nmid 2\Delta_Q}}
\left\{\sum_{k=0}^\xi \left(\frac{\Delta_Q}{p^k}\right)\right\}.
\end{equation}
Then if $Q_{\c}$ is non-singular there is an integer
$h\mid (\Dbad^3,Q^*(\c))$ such that there are
  \[\ll R(Q^*(\c))
  \left(1+
  \frac{\|Q\|B\Dbad^{1/4}h}{|\c|.|\Delta_Q|^{1/2}(\Dbad^3,h^4)^{1/4}}\right)
  \log\left(2+\frac{|\c|^2\|Q\|^3}{|Q^*(\c)|}\right)\]
  primitive vectors $\x$ with $|\x|\le B$, for which $Q(\x)=\c.\x=0$.
\end{lemma}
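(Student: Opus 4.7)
The plan is to produce the lattice-and-ellipsoid covering, parameterise each piece by Lemma~\ref{lattice}, and then apply the conic estimate of Lemma~\ref{V3}. Since $Q_{\c}$ is non-singular and $\Delta_{Q_{\c}}=Q^*(\c)$ by Lemma~\ref{lem:disc}, Lemma~\ref{two} provides lattices $\mathsf{\Lambda}_1,\dots,\mathsf{\Lambda}_I\subset\ZZ^3$ with $I\ll C(Q_{\c})$ whose union contains every primitive solution of $Q_{\c}=0$; lifting yields the lattices $\widehat{\mathsf{\Lambda}}_i\subset\ZZ^4$ whose determinants were computed just before the lemma. In parallel, Lemma~\ref{Q*B} covers the real solutions with $O(\log(2+|\c|^2\|Q\|^3/|Q^*(\c)|))$ ellipsoids $E_j\subset V:=\c^{\perp}$, each of the stated volume. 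Every primitive $\x\in\ZZp^4$ with $Q(\x)=\c.\x=0$ and $|\x|\le B$ then lies in some $\widehat{\mathsf{\Lambda}}_i\cap E_j$.

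To replace $C(Q_{\c})$ by $R(Q^*(\c))$, I use the divisibility facts established just before Lemma~\ref{NewB}: namely $D(Q_{\c})\mid 2^8\Delta_Q$, and $\chi_{Q_{\c}}(p)=\left(\tfrac{\Delta_Q}{p}\right)$ for each odd prime $p\nmid\Delta_Q$ with $p\mid Q^*(\c)$. Thus every prime contributing a $\tau(p^\xi)$ to $C(Q_{\c})$ divides $2\Delta_Q$ and hence contributes the same factor to $R(Q^*(\c))$, while at the remaining primes the character sums match and are dominated by $\tau(p^\xi)$ when a prime is among those swapped between the two products. Absorbing a bounded number of prime-$2$ factors into the implied constant, we obtain $I\ll R(Q^*(\c))$.

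Next, for each pair $(i,j)$, Lemma~\ref{lattice} applied to $V$, $\widehat{\mathsf{\Lambda}}_i$ and $E_j$ produces a $\ZZ$-basis $\mathbf{f}^{(1)},\mathbf{f}^{(2)},\mathbf{f}^{(3)}$ of $\widehat{\mathsf{\Lambda}}_i$ and positive numbers $L_1,L_2,L_3$ with
\[L_1L_2L_3\ll \frac{\meas(E_j)}{\det(\widehat{\mathsf{\Lambda}}_i)}\ll \left(\frac{\|Q\|B(\Dbad^3,Q^*(\c)^2)^{1/6}}{|\c|\cdot|\Delta_Q|^{1/2}}\right)^3,\]
such that any $\x\in\widehat{\mathsf{\Lambda}}_i\cap E_j$ can be written $\sum_k\lambda_k\mathbf{f}^{(k)}$ with $|\lambda_k|\le L_k$. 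The form $\tilde q(\lambda):=Q(\sum_k\lambda_k\mathbf{f}^{(k)})$ is an integral non-singular ternary quadratic form (being $Q|_V$ in new coordinates), and a primitive $\x\in\ZZp^4$ forces $(\lambda_1,\lambda_2,\lambda_3)\in\ZZp^3$. Hence Lemma~\ref{V3} yields $\ll 1+(L_1L_2L_3)^{1/3}$ primitive $\x$ in each piece. Multiplying by the numbers of pieces and invoking the elementary inequality $(\Dbad^3,Q^*(\c)^2)^{1/6}\le \Dbad^{1/4}h/(\Dbad^3,h^4)^{1/4}$ with $h=(\Dbad^3,Q^*(\c))$ gives the stated bound.

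The main bookkeeping obstacle is the comparison $C(Q_{\c})\ll R(Q^*(\c))$, which hinges on matching the two partitions of primes and exploiting the fact that $\chi_{Q_{\c}}$ coincides with the Legendre symbol of $\Delta_Q$ on the primes that actually contribute. The remaining steps are essentially a careful bookkeeping of inequalities already assembled in the excerpt.
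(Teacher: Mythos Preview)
Your argument is correct and follows essentially the same route as the paper: cover by the lattices $\widehat{\mathsf{\Lambda}}_i$ from Lemma~\ref{two} and the ellipsoids from Lemma~\ref{Q*B}, parameterise each piece via Lemma~\ref{lattice}, count with Lemma~\ref{V3}, and then weaken $(\Dbad^3,Q^*(\c)^2)^{1/6}$ using the elementary inequality with $h=(\Dbad^3,Q^*(\c))$. Your explicit verification that $C(Q_{\c})\le R(Q^*(\c))$ (using $D(Q_{\c})\mid 2^8\Delta_Q$ and the character identification) and that primitivity of $\x$ forces primitivity of $(\lambda_1,\lambda_2,\lambda_3)$ fills in steps the paper leaves implicit, but the strategy is the same.
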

\medskip

Assume for the time being that $\Delta_Q\neq \square$.
Returning to  \eqref{eq:step1}, we recall that
$$
N(B)\leq \sum_{\substack{\c\in \ZZp^4\\ |\c|\ll B^{1/3}}} 
\#\left\{ \x\in\ZZp^4:\,\x.\c=0,\,Q(\x)=0,\,|\x|\le B\right\}.
$$
As is well-known the rank of a quadratic form drops by at most $2$ on
any hyperplane. Thus $\rank Q_\mathbf{c}\geq 
2$. If $\rank Q_\mathbf{c}=2$ then the conic 
$Q_\mathbf{c}=0$ is a union of two lines. 
However the assumption that $\Delta_Q\neq
\square$ implies that there are no $\QQ$-lines contained in the
quadric surface $Q=0$. Thus if $\rank Q_\mathbf{c}=2$ then the conic
$Q_{\c}=0$ has exactly one
rational point, so that  the overall contribution from this case is
\[\le\#\left\{ \c\in\ZZp^4:\,|\c|\ll B^{1/3},\,Q^*(\c)=0\right\}.\]
However $Q^*$ is nonsingular, so that the number of such $\c$ is
$O(B)$, by  Heath-Brown \cite[Theorem 1]{annal}, for example. 
It  now follows from Lemma \ref{NewB} that
\beql{eq:NB}
N(B)\ll B+S+B\frac{\|Q\|\Dbad^{1/4}}{|\Delta_Q|^{1/2}}
\max_h\frac{h}{(\Dbad^3,h^4)^{1/4}}S_h,
\eeq
the maximum being for  
$h\mid  \Dbad^3$, where we have written 
\[S=\sum_{\substack{|\c|\ll B^{1/3},\,Q^*(\c)\not=0\\
    \c\in \ZZp^4}}
R(Q^*(\c))\log\left(2+\frac{|\c|^2\|Q\|^3}{|Q^*(\c)|}\right)\]
and
\[S_h=\sum_{\substack{|\c|\ll B^{1/3},\,Q^*(\c)\not=0\\
    \c\in \ZZp^4,~ h\mid Q^*(\c)}}
\frac{R(Q^*(\c))}{|\c|}\log\left(2+\frac{|\c|^2\|Q\|^3}{|Q^*(\c)|}\right).\]

It would be relatively straightforward to estimate these sums trivially, if
we permit ourselves the use of  the standard divisor sum bound
$R(N)\ll N^\ve$.   However, we shall need to show that 
$R(Q^*(\c))$ has order $1$ on average, ignoring possible factors of $\Dbad$.
Furthermore, in order to cope with the term
$|Q^*(\c)|$ in the logarithm, we shall need to study the average of
$R(Q^*(\c))$ in short intervals.

\section{Multiplicative functions over values of a quadratic form}

In this section we show how to handle averages of $R(Q^*(\c))$.
We begin by studying the function
\[\rho(m)=\#\{\x\in (\ZZ/m\ZZ)^4: Q^*(\x)\equiv 0\bmod{m}\},\]
which is clearly multiplicative. 
The properties of $\rho(p^k)$ that we require are summarized as follows.

\begin{lemma}\label{lem:rho}
  We have
 \[\rho(p)=p^3+\left(\frac{\Delta_Q}{p}\right)(p^2-p)\]
 when $p\nmid 2\Dbad$.  Moreover
 $\rho(p^k)\le 4kp^{3k}(\Dbad^3, p^{4k})^{1/4}$ for all $k\ge 1$
 and all primes $p$.
\end{lemma}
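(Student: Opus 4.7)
The plan is to prove the two claims of Lemma~\ref{lem:rho} separately, using the relation $\det M^{\mathrm{adj}}=\Delta_Q^3$ between the matrices of $Q$ and $Q^*$.

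For the explicit formula when $p\nmid 2\Dbad$, I first note that $p$ is odd and $v_p(\Delta_Q)\in\{0,1\}$. If $p\nmid \Delta_Q$ then $Q^*$ is non-degenerate modulo $p$ with $\left(\tfrac{\det Q^*}{p}\right)=\left(\tfrac{\Delta_Q^3}{p}\right)=\left(\tfrac{\Delta_Q}{p}\right)$, and the classical count of zeros of a non-degenerate quaternary form over $\FF_p$ gives exactly the claimed expression. If $p\|\Delta_Q$ then the Legendre symbol vanishes and we need $\rho(p)=p^3$: the Smith normal form of the symmetric integer matrix $M$ at $p$ has one elementary divisor $p$ and three units, so $M$ has rank $3$ modulo $p$, its adjugate $M^{\mathrm{adj}}$ has rank $1$ modulo $p$, and a rank-one quaternary form over $\FF_p$ has exactly $p^3$ zeros.

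For the upper bound on $\rho(p^k)$, I focus on odd $p$; the case $p=2$ requires allowing binary Jordan blocks in the diagonalisation, and the argument goes through with bounded losses. I diagonalise $Q^*$ modulo $p^k$ by a $\ZZ_p$-unimodular transformation as $Q^*(\x)\equiv \sum_i A_i x_i^2\pmod{p^k}$ with $v_p(A_i)=a_i$, $a_1\leq\cdots\leq a_4$, and $\sum_i a_i = 3e$ where $e=v_p(\Delta_Q)$. For $e=0$, Hensel's lemma together with Part~1 gives $\rho(p^k)=p^{3(k-1)}\rho(p)\leq 2p^{3k}$. For $e=1$ the diagonalised form reads $Q^*=A_1x_1^2+p\bar q(x_2,x_3,x_4)$ with $A_1\in\ZZ_p^\times$ and $\bar q$ a non-degenerate ternary form with unit coefficients; the substitution $x_1=py_1$ reduces counting modulo $p^k$ to the analogous problem modulo $p^{k-1}$ for a quadratic form of the same shape, giving the recursion $\rho(p^k)=p^3\rho(p^{k-1})$ and hence $\rho(p^k)=p^{3k}$ by induction from Part~1. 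In both cases $(\Dbad^3,p^{4k})^{1/4}=1$, so the bound $\rho(p^k)\leq 4k\,p^{3k}$ holds.

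For the remaining case $e\geq 2$, I apply orthogonality of additive characters and the standard Gauss sum bound $|G_i(t)|\leq p^{(k+\min(k,v_p(t)+a_i))/2}$, where $G_i(t)=\sum_x e_{p^k}(tA_ix^2)$. Isolating the $t=0$ term (which contributes $p^{3k}$) and grouping the remaining $t$ by $r=v_p(t)$ yields
\[\rho(p^k)\leq p^{3k}+\bigl(1-p^{-1}\bigr)\sum_{r=0}^{k-1} p^{T_r}, \qquad T_r=2k-r+\tfrac{1}{2}\sum_{i=1}^{4}\min(k,r+a_i).\]
A direct calculation shows that $T_{r+1}-T_r\in\{-1,-\tfrac12,0,\tfrac12,1\}$, so $T_r$ is unimodal with peak at most $\min(3k+3e/4,4k)$. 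Bounding the terms on either side of the peak by geometric series with ratio at most $p^{-1/2}$ and the plateau by at most $k$ equal terms gives $\sum_r p^{T_r}\ll k\,p^{\min(3k+3e/4,\,4k)}$, which combined with the trivial bound $\rho(p^k)\leq p^{4k}$ produces $\rho(p^k)\leq 4k\,p^{3k+\min(3e/4,k)}=4k\,p^{3k}(\Dbad^3,p^{4k})^{1/4}$, as $v_p(\Dbad)=e$ when $e\geq 2$. The main obstacle will be keeping the implied constant in the geometric summation small enough to fit inside the factor $4k$, which requires careful bookkeeping but no genuinely new idea; the prime $p=2$ demands similar care when passing through the Jordan decomposition of $Q^*$ over $\ZZ_2$.
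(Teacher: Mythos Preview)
Two of your small-$e$ steps contain genuine errors. The Hensel identity $\rho(p^k)=p^{3(k-1)}\rho(p)$ for $e=0$ is false: the affine cone is singular at the origin, and a direct check gives $\rho(p^2)=p^6+p^5-p^3\ne p^3\rho(p)$ when $(\Delta_Q/p)=1$. This is harmless, since your Gauss-sum argument for $e\ge 2$ applies verbatim to $e=0$ (with $\max_r T_r\le 3k$). The $e=1$ recursion is a more serious gap. The diagonal valuations of $Q^*$ are $(0,1,1,1)$, and after substituting $x_1=py_1$ and dividing by $p$ they become $(0,0,0,1)$, which is \emph{not} ``the same shape'': for this new form no single variable is forced to be divisible by $p$, so the substitution cannot be iterated, and the recursion $\rho(p^k)=p^3\rho(p^{k-1})$ does not follow. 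Your Gauss-sum upper bound is also too weak here, since it only gives $\max_r T_r\le 3k+3/4$. The conclusion $\rho(p^k)=p^{3k}$ is nonetheless correct and can be recovered either by running a coupled recursion between the two shapes $(0,1,1,1)$ and $(0,0,0,1)$, or by noting that for $e=1$ the sum $\sum_{(b,p^g)=1}S(b;p^g)$ vanishes for every $g\ge 1$, because an odd number of the four constituent Gauss sums carry the factor $(\tfrac{b}{p})$.

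The paper avoids this case analysis altogether. It uses the same expansion $\rho(p^k)=p^{3k}\sum_{g\le k}p^{-4g}\sum_{(b,p^g)=1}S(b;p^g)$, but for $g\ge 2$ bounds $|S(b;p^g)|$ via Cauchy--Schwarz together with the Smith normal form of $M^{\mathrm{adj}}$, obtaining $|S(b;p^g)|\le 4p^{2g}(\Delta_Q^3,p^{4g})^{1/2}$. This works uniformly for every prime, including $p=2$, with no diagonalisation and no $2$-adic Jordan blocks; summing over $g$ then yields the lemma directly. Incidentally, your worry about the constant in the $e\ge 2$ case is unfounded: the crude estimate $\sum_{r=0}^{k-1}p^{T_r}\le k\,p^{\max_r T_r}\le k\,p^{3k+\min(3e/4,k)}$ already gives $\rho(p^k)\le(k+1)\,p^{3k}(\Dbad^3,p^{4k})^{1/4}$, so no geometric-series bookkeeping is needed.
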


\begin{proof}
We start from the relation
\[\rho(p^k)=p^{-k}\sum_{a=1}^{p^k}\sum_{\x\modd{p^k}}S(a;p^k),\]
where
\[S(a;p^k)=\sum_{\x\modd{p^k}}e_{p^k}(aQ^*(\x)).\]
When $p^f\| a$ with $f\le k$ we have
\[S(a;p^k)=p^{4f}\sum_{\x\modd{p^{k-f}}}e_{p^{k-f}}(ap^{-f}Q^* (\x)),\]
so that
\beql{rhbas}
\rho(p^k)=p^{-k}\sum_{f=0}^k p^{4f}
\sum_{\substack{b=1\\ (b,\,p^{k-f})=1}}^{p^{k-f}} S(b;p^{k-f})
=p^{3k}\sum_{g=0}^k p^{-4g}
\sum_{\substack{b=1\\ (b,\,p^g)=1}}^{p^g} S(b;p^g) . 
  \eeq

To prove the first assertion of the lemma we take $k=1$ and begin by examining
$p\nmid 2\Delta_Q$. We may then  diagonalize $Q^*$ modulo $p$ as
${\rm Diag}(d_1,\ldots,d_4)$ say, with
\[d_1\ldots d_4\equiv\det(Q^*)\equiv\Delta_Q^3\modd{p}.\]
It follows that
\[S(b;p)=\prod_{i=1}^4G(bd_i,p),\]
where
\[G(b,p)=\sum_{x=1}^pe_p(bx^2)=\ve_p\left(\frac{b}{p}\right)\sqrt{p}\]
is a Gauss sum, with $\ve_p=1$ for $p\equiv 1\modd{4}$ and $\ve_p=i$ for
$p\equiv 3\modd{4}$. We then find that
\[S(b;p)=\left(\frac{\Delta_Q}{p}\right)p^2\]
and the first assertion of Lemma \ref{lem:rho} follows in the case
$p\nmid 2\Delta_Q$. When $p\|\Delta_Q$ for an odd prime $p$ we see
that $Q^*$ has rank 1 modulo $p$, and thence that $\rho(p)=p^3$.

For the second assertion of the lemma we note that
the terms $g=0$ and 1 in (\ref{rhbas})
produce 
$p^{3k-3}\rho(p)$.   This is at most $2p^{3k}$ when
$p$ does not divide the matrix $\mathbf{M}^{{\rm adj}}$ of $Q^*$, and
is  $p^{3k+1}$ otherwise. If $p$ does divide $\mathbf{M}^{{\rm adj}}$
we will have $p^4\mid\Delta_Q^3$, so that
$p^{3k-3}\rho(p)\le 2p^{3k}(\Dbad^3,p^{4k})^{1/4}$ in every case. 

When $g\ge 2$ we use Cauchy's inequality to deduce that
\begin{align*}
  |S(b;p^g)|^2&\le\sum_{\x\modd{p^g}}\sum_{\mathbf{y}\modd{p^g}}
  e_{p^g}(bQ^*(\mathbf{y})-bQ^*(\x))\\
  &=\sum_{\x\modd{p^g}}\sum_{\mathbf{z}\modd{p^g}}
  e_{p^g}(bQ^*(\mathbf{z}+\x)-bQ^*(\x))\\
  &\le\sum_{\mathbf{z}\modd{p^g}}\left|\sum_{\x\modd{p^g}}
  e_{p^g}(2b\mathbf{z}^T\mathbf{M}^{\mathrm{adj}}\x)\right|. 
\end{align*}
We can put 
$\mathbf{M}^{\mathrm{adj}}$  
into Smith Normal Form, by writing
$\mathbf{M}^{\mathrm{adj}}
=\mathbf{A}^T\mathbf{D}\mathbf{B}$ where $\mathbf{A}$ and
$\mathbf{B}$ are unimodular integer matrices and 
$\mathbf{D}={\rm Diag}(D_1,\ldots,D_4)$ is a
diagonal matrix  with
$D_1\ldots D_4=\det(Q^*)=\Delta_Q^3$. Then
\begin{align*}
  |S(b;p^g)|^2&\le \sum_{\mathbf{z}\modd{p^g}}\left|\sum_{\x\modd{p^g}}
  e_{p^g}(2b\mathbf{z}^T\mathbf{D}\x)\right|\\
  &=p^{4g}\card\{\mathbf{z}\modd{p^g}:
  2b\mathbf{D}\mathbf{z}\equiv\mathbf{0}\modd{p^g}\}.
  \end{align*}
Since $p\nmid b$ there are $(2D,p^g)$ solutions to
$2bDz\equiv 0\modd{p^g}$, whence
\[|S(b;p^g)|^2\le p^{4g}\prod_{i=1}^4(2D_i,p^g)\le
16p^{4g}\left( D_1\dots D_4,p^{4g}\right).\]
It follows that
\[|S(b;p^g)|\le 4p^{2g}(\Delta_Q^3,p^{4g})^{1/2}\]
for $g\ge 2$.
When $p\nmid\Dbad$ we have $(\Delta_Q^3,p^{4g})\le p^3$. In this case
the terms of (\ref{rhbas}) with $2\le g\le k$ contribute at most
\[p^{3k}\sum_{g=2}^k p^{-4g}
\sum_{\substack{b=1\\ (b,p^g)=1}}^{p^g}|S(b;p^g)|\le  
4p^{3k}\sum_{g=2}^{\infty} p^{-g+3/2}(1-p^{-1})\le 3p^{3k}.\]
The terms $g=0$ and $g=1$ combine to produce $p^{3k-3}\rho(p)\le
2p^{3k}(\Dbad,p^k)$, whence $\rho(p^k)\le 5p^{3k}$ for
$p\nmid\Dbad$. This is satisfactory for the lemma.

Similarly when $p\mid\Dbad$ we observe that
  \[(\Delta_Q^3,p^{4g})^{1/2}\le (\Delta_Q^3,p^{4g})^{1/4}p^g,\]
so that terms with $2\le g\le k$ contribute at most
 \begin{align*}
  p^{3k}\sum_{g=2}^k p^{-4g}
\sum_{\substack{b=1\\ (b,p^g)=1}}^{p^g}|S(b;p^g)|&\le
4p^{3k}\sum_{g=2}^k (\Delta_Q^3,p^{4g})^{1/4}\\ 
&\le  4(k-1)p^{3k}(\Dbad^3,p^{4k})^{1/4}.
\end{align*}
Adding in the terms for $g=0$ and $g=1$, as before, we therefore find that
$\rho(p^k)\le 4kp^{3k}(\Dbad^3,p^{4k})^{1/4}$.
 The second part of the lemma then follows.
\end{proof}

We can now describe the average of $R(Q^*(\c))$ which we plan to estimate.
Given any $\u\in \RR^4$, write
\[\mathcal{R}=\{\x\in \RR^4:  |\x-\u|\leq X,\, Q^*(\x)\not=0\}.\]
This set has measure $O(X^4)$. 
We are interested here in the size of the sum 
\[S^{(h)}(X)=\sup_{\u\in \RR^4}\;
\sum_{\substack{ \x\in \ZZ^4\cap \mathcal{R}\\ h\mid Q^*(\x)}} R(|Q^*(\x)|).\]
By developing a variant of  familiar arguments of Shiu \cite{shiu}, we shall
establish the  following estimate. 

\begin{theorem}\label{t:nair}
Suppose that
  \begin{equation}\label{eq:assume}
\sup_{\x\in \cR} |Q^*(\x)|\leq X^A,
  \end{equation}
  for some constant $A$, and let $\ve>0$ be given.  Then if
  $h\mid\Dbad^3$ we have 
  \[S^{(h)}(X)\ll_{A,\ve} \Dbad^{\ve}h^{-1}(\Dbad^3,h^4)^{1/4}
  \mathfrak{S}\frac{X^4}{\log X},\]
uniformly for $h\le X^{1-\ve}$, where 
  \[\mathfrak{S}=\prod_{p\le X}\left(1+\frac{R(p)}{p}\right).\]
\end{theorem}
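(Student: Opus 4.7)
The plan is to adapt Shiu's mean-value estimate for multiplicative functions, in its Nair--Tenenbaum form for polynomial values, to the four-variable form $Q^*$ with the extra divisibility constraint $h\mid Q^*(\x)$. The key algebraic input is that $R$ is multiplicative and admits the convolution $R=\mathbf{1}\ast g$, where $g$ is the multiplicative function with $g(p^k)=\chi(p^k)$ for $p\nmid 2\Delta_Q$ and $g(p^k)=1$ for $p\mid 2\Delta_Q$; in particular $|g(n)|\le 1$ for every $n$, which replaces the divisor-bound input of the standard argument.

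Swapping summations yields
\[S^{(h)}(X) \le \sum_{d \le X^A} |g(d)|\cdot N(\mathrm{lcm}(d,h)),\]
where $N(m)=\#\{\x\in\ZZ^4\cap\cR:m\mid Q^*(\x)\}$ and the restriction $d\le X^A$ comes from hypothesis \eqref{eq:assume}. Since the residues modulo $m$ satisfying $m\mid Q^*(\x)$ form $\rho(m)$ classes, a Lipschitz count gives
\[N(m)=\frac{\rho(m)}{m^4}\meas(\cR)+O\bigl(\rho(m)(1+X/m)^3\bigr).\]
I would truncate at $d\le D_0$ with $D_0=X/(\log X)^C$, so that the Lipschitz errors in the main range are negligible, and treat the tail $d>D_0$ by a combination of trivial estimates and the divisor bound $R(N)\ll N^\varepsilon$, at a cost of at most $\Dbad^\varepsilon$.

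For the main term one must estimate
\[\sum_{d\le D_0} \frac{g(d)\,\rho(\mathrm{lcm}(d,h))}{\mathrm{lcm}(d,h)^4},\]
which by multiplicativity factors (up to tail error absorbed in $\Dbad^\varepsilon$) as an Euler product over primes. At $p\nmid 2\Dbad$, Lemma~\ref{lem:rho} gives the local factor
\[1+\sum_{k\ge 1}\frac{\chi(p^k)\rho(p^k)}{p^{4k}}=1+\frac{\chi(p)}{p}+O(p^{-2}).\]
Invoking the identity
\[1+\frac{R(p)}{p}=\left(1+\frac{1}{p}\right)\left(1+\frac{\chi(p)}{p}\right)+O(p^{-2})\]
for $p\nmid 2\Delta_Q$, together with the Mertens estimate $\prod_{p\le X}(1+1/p)\asymp\log X$, identifies the product of local factors over good primes $p\le X$ with $\mathfrak{S}/\log X$, up to a factor $\Dbad^\varepsilon$.

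At each prime $p\mid\Dbad$, the second part of Lemma~\ref{lem:rho} yields $\rho(p^k)\le 4kp^{3k}(\Dbad^3,p^{4k})^{1/4}$, while the forced divisibility $p^{v_p(h)}\mid\mathrm{lcm}(d,h)$ contributes $p^{-4v_p(h)}$ in the denominator. Summing the local series over $k\ge v_p(h)$ yields a contribution of order $p^{-v_p(h)}(\Dbad^3,p^{4v_p(h)})^{1/4}$, up to $\Dbad^\varepsilon$. Taking the product over $p\mid\Dbad$ recovers precisely the factor $h^{-1}(\Dbad^3,h^4)^{1/4}$ demanded by the theorem. The main obstacle will be coordinating the truncation and tail estimates uniformly in $h$: the Lipschitz error $\sum_{d\le D_0}\rho(\mathrm{lcm}(d,h))(X/\mathrm{lcm}(d,h))^3$ must remain $o(X^4\mathfrak{S}/\log X)$, a balance that becomes tightest precisely at the upper end $h\approx X^{1-\varepsilon}$ of the allowed range.
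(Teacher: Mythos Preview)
The paper's proof follows Shiu's method directly (which is also how Nair--Tenenbaum proceed for polynomial values): for each $\x$ one writes $|Q^*(\x)|=hab$ where $a\le z^2$ (with $z=X^{\varepsilon/13}$) is built from the smallest prime factors and $b$ is the rough cofactor, uses the submultiplicativity \eqref{Ri} to get $R(|Q^*(\x)|)\ll h^\eta R(a)\tau(b)$, observes that $b\le X^A$ with $P^-(b)\ge z$ forces $\Omega(b)\le A/\eta$ so $\tau(b)=O_{A,\eta}(1)$, and then applies a Selberg upper-bound sieve (Lemma~\ref{sieve}) to count $\x$ with $ah\mid Q^*(\x)$ and $P^-\bigl(Q^*(\x)/(ah)\bigr)\ge\tau$. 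No Dirichlet convolution of $R$ enters.

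Your plan instead opens $R=\mathbf{1}*g$ and swaps, and the gap is the tail. After the swap, $d$ runs all the way to $X^A$, with $A=365$ in the application; the claim that the range $d>D_0$ costs only $\Dbad^{\varepsilon}$ is not justified. Swapping back, the tail is $\sum_{\x}\sum_{d\mid Q^*(\x),\,d>D_0}g(d)$, and for typical $\x$ the inner sum has no exploitable cancellation---its trivial bound is a positive fraction of $\tau(Q^*(\x))\ll X^{A\varepsilon}$, a loss in $X$ rather than in $\Dbad$. Nor can one pass from $\sum_{d\le D_0}g(d)\rho(d)/d^4$ to its Euler product cheaply: the absolute series diverges (its local factor at good $p$ is $\sim(1-1/p)^{-1}$), and the conditional convergence available through $g\approx\chi$ brings in the full conductor $|\Delta_Q|$ via P\'olya--Vinogradov, not merely $\Dbad$. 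This long-divisor obstruction is precisely what Shiu's smooth/rough decomposition is designed to evade, by ensuring one only ever sums over moduli $a\le z^{O(1)}$. There is also an inconsistency in your write-up: you take $|g(d)|$ to obtain an inequality but then revert to the signed $g(d)$ for the main term; with $|g|\equiv 1$ the local factor at good $p$ becomes $1+1/p+O(p^{-2})$, so the truncated product is $\asymp\log X$ rather than $\mathfrak{S}/\log X$, which is too weak for the theorem as stated.
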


For our argument we will use a parameter $z=X^{\eta}$ with
$\eta>0$. We will eventually choose $\eta=\ve/13$.  However the structure
of the proof will be clearer if we leave $\eta$ undetermined for the
time being.  In the course of the proof we will allow all the
constants implied by the $O(\ldots)$, $\ll$ and $\gg$ notations to
depend on $A$, $\ve$ and $\eta$.

An inspection of \eqref{eq:R} shows that $R(p^{e+f})\le R(p^e)R(p^f)$
except possibly when $p\nmid 2\Delta_Q$ with $e$ and $f$ both odd.
Thus
\beql{Ri}
R(uv)\le R(u)R(v)\le\tau(u)R(v)
\eeq
unless there is some prime $p\nmid 2\Delta_Q$ which divides both $u$
and $v$ to an odd power.
For any $\x\in\ZZ^4\cap\cR$ with $h\mid Q^*(\x)$ we now let
$|Q^*(\x)|=hp_1p_2\ldots p_r$ with $p_1\le p_2 \le 
\ldots \le p_r$, and
choose $j\in [0,r]$ maximally such that $a=p_1\ldots p_j\le z^2$. We
then set $b=p_{j+1}\ldots p_r$.  We will consider four
cases. If $a\le z$ then since $j$ was chosen maximally we must have
$j=r$ or $p_{j+1}> z\ge a$. In both of these situations (\ref{Ri})
shows that $R(|Q^*(\x)|)\le\tau(hb)R(a)$. Moreover, since
$p_{j+1}\ge z$ we have 
\[z^{r-j}\le p_{j+1}^{r-j}\le p_{j+1} p_{j+2}\ldots p_r\le|Q^*(\x)|\le X^A,\]
so that $r-j\le A(\log X)/(\log z)=A/\eta$. Thus $\tau(b)\ll 1$ and
\[R(|Q^*(\x)|)\leq\tau(hb)R(a)\le\tau(h)\tau(b)R(a)\ll h^{\eta}R(a)\]
when $a\le z$. We remind the reader that in this case we have
$P^{-}(b)>z$, where $P^{-}(n)$ is the smallest prime factor of $n$
(and $P^{-}(1)=\infty$).  Similarly we write $P^{+}(n)$ for the
largest prime factor of $n$, with $P^+(1)=1$.

The next case to examine is that in which $z<a\le z^2$ and
$p_{j+1}>p_j>\log X$. Here again we find from (\ref{Ri}) that
$R(|Q^*(\x)|)\le\tau(hb)R(a)$. This time we note that
\[p_j^{r-j}\le p_{j+1} p_{j+2}\ldots p_r\le|Q^*(\x)|\le X^A,\]
whence $r-j\le A(\log X)/(\log p_j)$ and
\[\tau(b)\le 2^{\Omega(b)}=2^{r-j}\le X^{A/\log p_j}.\]
Proceeding as before we are led to the bound
\beql{R2}
R(|Q^*(\x)|)\ll h^{\eta}X^{A/\log p_j}R(a),
\eeq
in which we have $P^+(a)=p_j<p_{j+1}=P^-(b)$.

When $z<a\le z^2$ with $p_{j+1}=p_j>\log X$ we are unable to use
(\ref{Ri}) in quite the same way. In view of the construction of $a$
and $b$ the only prime factor which they can share is $p_j$.  If $p_j$
divides one or both of $a$ or $b$ to an even power we may derive
(\ref{R2}) as before. So we now suppose that $p_j$ divides each of $a$
and $b$ to an odd power.  In this situation we set
$a'=ap_{j+1}$ and $b'=b/p_{j+1}$ so that
\[R(|Q^*(\x)|)\ll h^{\eta}X^{A/\log p_j}R(a'),\]
by the argument leading to (\ref{R2}). Since $p_{j+1}=p_j\le a\le z^2$ we
then have $z<a'\le z^4$ and $P^+(a')=p_j=p_{j+1}\le P^-(b')$.

The remaining case is that in which $z<a\le z^2$ but
$p_j\le\log X$, and here we merely use the fact that
\[R(|Q^*(\x)|)\ll X^{\eta}.\]

In the third case we change notation writing $a$ in place of $a'$. 
We then see that 
\[S^{(h)}(X)\ll h^{\eta}\left\{T_1(X)+T_2(X)+X^{\eta}T_3(X)\right\},\]
with
\begin{align*} 
T_1(X)&=\sum_{a\le z}R(a)U(ah;z),\\
T_2(X)&=\sum_{\log X<p_j\le z^2}X^{A/\log p_j}   
\twosum{z< a\le z^4}{P^+(a)=p_j}R(a)U(ah;p_j), 
\end{align*}
and
\[T_3(X)=\twosum{z< a\le z^2}{P^+(a)\le\log X}U(ah;2),\] 
where we have defined
\[U(a;\tau)=\#\left\{\x\in \ZZ^4\cap \mathcal{R}:
a\mid Q^*(\x), P^-(Q^*(\x)/a)\ge\tau\right\}.\]
This is estimated in the following lemma, 
in which $\varpi$ is defined in \eqref{eq:phid} and which we shall prove later. 
\begin{lemma}\label{sieve}
If  $a\le Xz^{-11}$ we have
\[U(a;\tau) \ll\varpi(\Dbad)\varpi(a)\frac{X^4\rho(a)}{a^4\log\tau},\]
for $2\le\tau\le z^2$.
\end{lemma}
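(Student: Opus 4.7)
The plan is to apply a dimension-$1$ upper-bound sieve, in the spirit of Shiu~\cite{shiu}, to the sequence $\{Q^*(\x):\x\in\ZZ^4\cap\cR,\,a\mid Q^*(\x)\}$, sifting out primes $p<\tau$. As input I would establish the level-of-distribution estimate
\[\#\{\x\in\ZZ^4\cap\cR:m\mid Q^*(\x)\}=\frac{(2X)^4\rho(m)}{m^4}+O\!\left(\frac{\rho(m)X^3}{m^3}+\rho(m)\right)\]
by partitioning the box $\cR$ (of side $O(X)$) into residue classes modulo $m$. M\"obius inversion, together with the observation that for squarefree $d$ the condition $d\mid Q^*(\x)/a$ (given $a\mid Q^*(\x)$) is equivalent to $ad\mid Q^*(\x)$ irrespective of $(d,a)$, then yields
\[U(a;\tau)=\sum_{\substack{d\text{ squarefree}\\ P^+(d)<\tau}}\mu(d)\,\#\{\x\in\ZZ^4\cap\cR: ad\mid Q^*(\x)\}.\]

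I would next insert upper-bound sieve weights $\Lambda_d^+$ supported on squarefree $d\le D$ with $P^+(d)<\tau$, for instance those of Rosser--Iwaniec or the fundamental lemma of Halberstam--Richert. The hypothesis $a\le Xz^{-11}$ combined with $\tau\le z^2$ allows the choice $D\approx X/a\ge z^{11}$, giving $\log D/\log\tau\ge 11/2$, comfortably within the fundamental-lemma regime. The sieve then returns a main term of the shape
\[(2X)^4\frac{\rho(a)}{a^4}\prod_{p<\tau}\left(1-\frac{\rho_a(p)}{p^4}\right),\]
where $\rho_a(p)=\rho(p)$ if $p\nmid a$ and $\rho_a(p)=\rho(p^{e+1})/\rho(p^e)$ if $p^e\|a$; the accompanying discrepancy is dominated via the bound $\rho_a(d)/d^3\ll\tau(d)^{O(1)}$ and the ample margin $X/a\gg z^{11}$ in the choice of sieve level.

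It then remains to evaluate the Euler product. For primes $p\nmid 2\Dbad a$, the first part of Lemma~\ref{lem:rho} gives $\rho(p)=p^3+\chi(p)(p^2-p)$, and hence $1-\rho(p)/p^4=(1-1/p)(1+O(p^{-2}))$. Mertens' theorem then produces the decisive factor $(\log\tau)^{-1}$, together with the reciprocal $\prod_{p\mid 2\Dbad a,\,p<\tau}(1-1/p)^{-1}\ll\varpi(\Dbad)\varpi(a)$ coming from the primes that must be restored. Each remaining factor at $p\mid 2\Dbad a$, $p<\tau$, is at most $1$, and combining these estimates yields the claimed bound. The main technical obstacle is the control of the bad primes $p\mid\Dbad$ (and, more mildly, $p\mid a$): there the clean formula $\rho(p)\sim p^3$ fails, and one must invoke the second part of Lemma~\ref{lem:rho}, the bound $\rho(p^k)\le 4kp^{3k}(\Dbad^3,p^{4k})^{1/4}$, both to tame the sieve error at such primes and to ensure that their aggregate contribution is absorbed into the single factor $\varpi(\Dbad)$.
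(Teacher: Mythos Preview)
Your overall strategy --- a dimension-$1$ upper-bound sieve applied to the sequence of values $Q^*(\x)/a$, with Mertens giving $(\log\tau)^{-1}$ and the excluded primes yielding $\varpi(\Dbad)\varpi(a)$ --- is exactly the paper's approach, but there is a genuine gap in your handling of the primes $p\mid 2a\Dbad$. You sift over \emph{all} primes $p<\tau$ and plan to control the bad ones afterwards via the second part of Lemma~\ref{lem:rho}. That does not work. Whenever $p$ divides every entry of $\mathbf{M}^{\mathrm{adj}}$ (which can happen for $p\mid\Dbad$; take e.g.\ $\mathbf{M}=\Diag(p,p,1,1)$) one has $\rho(p)=p^4$, so your sieve density $\rho_a(p)/p^4$ equals~$1$. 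The hypotheses of the fundamental lemma (or of Selberg's sieve) then fail outright, and your asserted remainder estimate $\rho_a(d)/d^3\ll\tau(d)^{O(1)}$ is false for such $d$: at these primes $\rho(p)/p^3=p$, not $O(1)$. Lemma~\ref{lem:rho} only yields $\rho(p)\le 4p^4$ here, which is useless for establishing $g(p)<1$.

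The paper avoids this by a simple but essential device: since one seeks only an \emph{upper} bound for $U(a;\tau)$, one may relax the sifting and work with $P=\prod_{p<\tau,\,p\nmid 2a\Dbad}p$, i.e.\ sift only over good primes. For $d\mid P$ one then has $(d,a)=1$, hence $\rho(ad)=\rho(a)\rho(d)$ exactly, and $\omega(p)/p=\rho(p)/p^4=1/p+O(1/p^2)$ by the \emph{first} part of Lemma~\ref{lem:rho}. The Selberg sieve applies cleanly with $\kappa=1$; the remainder $\sum_{d<\tau^2}\tau_3(d)|R_d|\ll X^3\rho(a)a^{-3}\tau^5$ is dominated using $a\le Xz^{-11}$ and $\tau\le z^2$; and the factor $\varpi(\Dbad)\varpi(a)$ appears precisely as you describe, from reinstating the omitted primes in Mertens' product. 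With this one modification --- drop the bad primes from the sifting set rather than trying to tame them inside the sieve --- your argument coincides with the paper's.
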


Taking this for granted for the time being, we need to consider
$\rho(ah)$. We define a
multiplicative function $\rho_0$ by setting
$$
\rho_0(p^e)=
\begin{cases}
4ep^{3e}(\Dbad^3,p^{4e})^{1/4}, & \text{ if $p\mid\Dbad$,}\\ 
\rho(p^e), & \text{ if  $p\nmid\Dbad$},
\end{cases}
$$
for any $e\geq 1$.
 Then if $a=a_1a_2$ with $a_1\mid\Dbad^{\infty}$ and
$(a_2,\Dbad)=1$, we will have
\[\rho(ah)=\rho(a_1h)\rho(a_2)\le\rho_0(a_1)\rho_0(h)\rho_0(a_2)
=\rho_0(h)\rho_0(a).\]
In particular we now see that
$\varpi(ah)\rho(ah)\ll h^{3+\eta}(\Dbad^3,h^4)^{1/4}\varpi(a)\rho_0(a)$.

Thus if $h\le Xz^{-13}$ we have
\beql{st}
S^{(h)}(X)\ll X^4\varpi(\Dbad)h^{-1+2\eta}(\Dbad^3,h^4)^{1/4}\Sigma,
\eeq
where
\[\Sigma=
\left\{\frac{\Sigma_1}{\log X}
+\sum_{\log X<p_j\le z^2}\frac{X^{A/\log p_j}}{\log p_j}
\Sigma_2(p_j)+X^{\eta}\Sigma_3\right\},\]
with
\[\Sigma_1=\sum_{a\le z}\frac{\rho_0(a)\varpi(a)R(a)}{a^4},\]
\[\Sigma_2(p_j)=\twosum{z<a\le z^4}{P^+(a)=p_j}  
\frac{\rho_0(a)\varpi(a)R(a)}{a^4},\]
and
\[\Sigma_3=
\twosum{z< a\le z^2}{P^+(a)\le\log X}\frac{\rho_0(a)\varpi(a)}{a^4}.\]
Note that the condition on $h$ is just $h\le X^{1-\ve}$, 
in view of the choice $\eta=\ve/13$.

We begin our analysis of these sums by examining $\Sigma_2(p_j)$.
Since $p_j$ tends to infinity with $X$ we may put
\[\delta=\delta(p_j)=
\frac{A+1}{\eta\log p_j}\in (0,\min\{\tfrac18, \tfrac{\eta}{2}\}]\] 
for  large enough $X$, so that 
\[\Sigma_2(p_j)\le \twosum{z <a\le z^4}{P^+(a)=p_j}  
\frac{\rho_0(a)\varpi(a)R(a)}{a^4}\left(\frac{a}{z}\right)^{\delta}.\]
Recalling that $z=X^{\eta}$ we then have
\[z^{-\delta}X^{A/\log p_j}= X^{-1/\log p_j}.\]
Moreover
\[\twosum{z <a\le z^4}{P^+(a)=p_j}\frac{\rho_0(a)\varpi(a)R(a)}{a^{4-\delta}}
\le\sum_{\substack{a=1\\ P^+(a)=p_j}}^{\infty}
\frac{\rho_0(a)\varpi(a)R(a)}{a^{4-\delta}},\]
which factorizes as
\beql{sp}
\prod_{p\le p_j}\sigma_p,
\eeq
say.  We therefore have
\beql{stop}
\sum_{\log X<p_j\le z^2}\frac{X^{A/\log p_j}}{\log p_j}
\Sigma_2(p_j)
\leq \sum_{\log X<p_j\le z^2}\frac{X^{-1/\log p_j}}{\log p_j}
\prod_{p\le p_j}\sigma_p.
\eeq

We shall prove the following estimates.
\begin{lemma}\label{lem:sig}
  When $p<p_j$ does not divide $2\Dbad$ we have
  \[\sigma_p=1+\frac{R(p)}{p}+O(p^{-3/2})
  +O\left(\frac{\log p}{p\log p_j}\right).\]
When $p^v\|2\Dbad$ for $v\ge 1$ we have
  \beql{C?}
    \sigma_p\ll (v+1)^3p^{v\delta}. 
  \eeq
Finally, if $p=p_j$ does not divide $2\Dbad$ we have
\[\sigma_p\ll p^{-1}.\]  
  \end{lemma}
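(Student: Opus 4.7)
The key observation is that, since $\rho_0$, $\varpi$ and $R$ are all multiplicative, the definition in \eqref{sp} expresses $\sigma_p$ as a single Euler factor
\[\sigma_p = \sum_{e\ge\epsilon_p} \frac{\rho_0(p^e)\varpi(p^e)R(p^e)}{p^{(4-\delta)e}},\]
with $\epsilon_p=0$ for $p<p_j$ and $\epsilon_{p_j}=1$. In each of the three cases I would isolate the $e=1$ contribution as the leading term and control the tail $e\ge 2$ as a rapidly convergent geometric-type series in $p^{\delta-1}$.

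For the first case ($p<p_j$, $p\nmid 2\Dbad$), Lemma \ref{lem:rho} gives $\rho(p)=p^3+\chi(p)(p^2-p)$, while $\varpi(p)=1+1/p$ and $R(p)=1+\chi(p)$. Using $(1+\chi(p))^2=2R(p)$ (valid since $\chi(p)^2=1$) and expanding, one finds the $e=1$ contribution to be $R(p)p^{\delta-1}+O(p^{-2})$. Since $\delta\log p\le\delta\log p_j=(A+1)/\eta$ is bounded, the elementary estimate $p^\delta-1=O(\delta\log p)=O(\log p/\log p_j)$ converts $R(p)p^{\delta-1}$ into $R(p)/p+O(\log p/(p\log p_j))$. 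The $e\ge 2$ tail is handled using $\rho(p^e)\ll p^{3e}$ (again from Lemma \ref{lem:rho}) together with $R(p^e)\le e+1$, summing to $O(p^{-3/2})$ for $\delta<1/8$. The third case ($p=p_j$, $p\nmid 2\Dbad$) is essentially the same computation minus the $e=0$ term; the bound $R(p)\le 2$ and the same tail estimate then give $\sigma_p\ll 1/p$.

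The main obstacle is the second case, $p\mid 2\Dbad$. Writing $w=v_p(\Dbad)\le v$, Lemma \ref{lem:rho} now provides only the weaker bound $\rho_0(p^e)\le 4ep^{3e}(\Dbad^3,p^{4e})^{1/4}=4ep^{3e+\min(3w/4,\,e)}$. Combined with $R(p^e)\le\tau(p^e)=e+1$ and $\varpi(p^e)\le 2$, the $e$-th term of $\sigma_p$ is at most $8e(e+1)p^{\delta e}$ when $e\le 3w/4$ and at most $8e(e+1)p^{3w/4}p^{-(1-\delta)e}$ when $e>3w/4$. I would split the sum at $e=\lceil 3w/4\rceil$: the first piece has at most $w+1\le v+1$ terms, each dominated by the largest one, which is $\ll(v+1)^2 p^{3w\delta/4}$; the second piece, being geometric in $p^{\delta-1}$ with $\delta<1/2$, is dominated by its first term, of the same order. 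Since $3w/4\le v$, the two contributions combine to give $\sigma_p\ll(v+1)^3 p^{v\delta}$, as required.
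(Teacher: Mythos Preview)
Your argument is correct and follows essentially the same route as the paper: isolate the $e=1$ term using $\rho_0(p)=p^3+O(p^2)$ and $p^{\delta}=1+O(\delta\log p)$, bound the $e\ge 2$ tail via $\rho_0(p^e)\ll ep^{3e}$, and for $p\mid 2\Dbad$ split the sum at the point where the gcd factor saturates (the paper uses the slightly cruder bound $(\Dbad,p^e)\ge(\Dbad^3,p^{4e})^{1/4}$ and so splits at $e=v$ rather than at $e\approx 3w/4$, but the endgame is identical). One small caveat: your identity $(1+\chi(p))^2=2R(p)$ fails when $p\|\Delta_Q$ with $p$ odd (then $p\nmid 2\Dbad$ yet $\chi(p)=0$ and $R(p)=2$), but in that case $\rho(p)=p^3$ exactly and the direct computation still yields the $e=1$ term as $R(p)p^{\delta-1}+O(p^{-2})$, so your conclusion stands.
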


\begin{proof}
For primes $p\not=p_j$ with $p\nmid 2\Dbad$ we have
\[\sigma_p=
1+\sum_{e=1}^\infty\frac{\rho_0(p^e)\varpi(p^e)R(p^e)}{p^{(4-\delta)e}}.\]
Moreover $\rho_0(p^e)\le 4ep^{3e}$, by Lemma \ref{lem:rho}.
Since $R(p^e)\le e+1$ and $\delta\le 1/4$ we find that
\[\sum_{e=2}^\infty\frac{\rho_0(p^e)\varpi(p^e)R(p^e)}{p^{(4-\delta)e}}\ll
\sum_{e=2}^\infty\frac{(e+1)e}{p^{3e/4}}\ll p^{-3/2}.\]
For $e=1$ we see via Lemma \ref{lem:rho} that
$\rho_0(p)=p^3+O(p^2)$ and hence that
\[\frac{\rho_0(p)\varpi(p)R(p)}{p^4}p^{\delta}=\frac{R(p)}{p}
+O\left(\frac{\log p}{p\log p_j}\right)+O(p^{-3/2}).\]
The first assertion of the lemma then follows.

Similarly, when $p\mid 2\Dbad$ we find that
\[\sigma_p\le 1+\sum_{e=1}^{\infty}
\frac{4e(e+1)}{p^{(1-\delta)e}}\varpi(p)(\Dbad,p^{e}).\] 
We can estimate this sum by
breaking it at $e=v$, where $p^v\|\Dbad$.  One then finds that
\[\sigma_p\ll (v+1)^3p^{v\delta},\]
as required.

In the case $p=p_j$ we have
\[\sigma_p=\sum_{e=1}^\infty\frac{\rho_0(p^e)\varpi(p^e)R(p^e)}{p^{(4-\delta)e}}.\]
The analysis is now just as above, except that there is no term
corresponding to $e=0$. This completes the proof of the lemma. 
\end{proof}

We can now use Lemma \ref{lem:sig} to estimate the product
(\ref{sp}). The principle we employ is that if $a_n\ge 0$ and
$\sum_{n=1}^N|b_n|=B$, 
then
\[\prod_{n=1}^N(1+a_n+b_n)\le e^B\prod_{n=1}^N(1+a_n).\]
Thus
\[\prod_{p<p_j,\,p\nmid 2\Dbad}\sigma_p\ll \prod_{p<p_j,\,p\nmid
  2\Dbad}\left(1+\frac{R(p)}{p}\right).\]
On the other hand, if the implied constant in (\ref{C?}) is $C_0$, then
\[\prod_{p\le p_j,\,p\mid 2\Dbad}\sigma_p\ll
\tau(\Dbad)^{C_0+3}\Dbad^{\delta}\ll\Dbad^{\eta},\]
since $\delta\leq \eta/2$.  Using  the final part
of Lemma \ref{lem:sig} to bound $\sigma_{p_j}$ when $p_j\nmid 2\Dbad$
we therefore have
\[  \prod_{p\le p_j}\sigma_p\ll \Dbad^\eta ~p_j^{-1}\prod_{p< p_j}
 \left(1+\frac{R(p)}{p}\right)\ll\Dbad^\eta~p_j^{-1}\mathfrak{S}\]
when $p_j\nmid 2\Dbad$, and
\[\prod_{p\le p_j}\sigma_p\ll \Dbad^\eta \mathfrak{S}\]
if $p_j\mid 2\Dbad$.

It then follows from (\ref{stop}) that
\begin{align*}
\sum_{\log X<p_j\le z^2}\frac{X^{A/\log p_j}}{\log p_j}
    \Sigma_2(p_j)
  \ll \Dbad^{\eta}\mathfrak{S}  
\left\{\sum_{p_j\le z^2}\frac{X^{-1/\log p_j}}{p_j\log p_j}+
\sum_{p_j\mid 2\Dbad}\frac{X^{-1/\log p_j}}{\log p_j}\right\}.
\end{align*}
However, 
\begin{align*}
\sum_{X^{1/(r+1)}<p\le X^{1/r}}X^{-1/\log p}(p\log p)^{-1}&\le
e^{-r}\sum_{p>X^{1/(r+1)}}(p\log p)^{-1}\\
&\ll  e^{-r}r(\log X)^{-1},
\end{align*}
uniformly for all positive integers $r$, whence
\[\sum_{p_j\le z^2}\frac{X^{-1/\log p_j}}{p_j\log p_j}\ll (\log
X)^{-1}.\]
Moreover $X^{-1/\log p_j}/\log p_j\ll(\log X)^{-1}$ for any prime
$p_j$, so that
\[\sum_{p_j\mid 2\Dbad}\frac{X^{-1/\log p_j}}{\log p_j}\ll
\tau(\Dbad)(\log X)^{-1}.\]
According to (\ref{st}) the terms involving $\Sigma_2(p_j)$ therefore
make a contribution
\begin{align*}
&\ll \frac{X^4}{\log X}\varpi(\Dbad)\Dbad^{\eta}\tau(\Dbad)h^{-1+2\eta}
(\Dbad^3,h^4)^{1/4}\mathfrak{S}\\
&\ll \frac{X^4}{\log X}\Dbad^{2\eta}h^{-1+2\eta}
(\Dbad^3,h^4)^{1/4}\mathfrak{S},
\end{align*}
which is satisfactory for Theorem \ref{t:nair}, provided that we
choose $8\eta<\ve$, since $h\leq \Dbad^3$.  
Indeed, we mentioned earlier that the appropriate
choice is $\eta=\ve/13$. 

The treatment of $\Sigma_1$ is now straightforward.
We have
\[\Sigma_1=\sum_{a\le z^2}\frac{\rho_0(a)\varpi(a)R(a)}{a^4}
\le \sum_{\substack{a=1 \\P^+(a)\le z^2}}^\infty\frac{\rho_0(a)\varpi(a)R(a)}{a^4}
  =\prod_{p\le z^2}\sigma_p,\]
  where we now have
\[\sigma_p=1+\sum_{e=1}^\infty\frac{\rho_0(p^e)\varpi(p^e)R(p^e)}{p^{4e}}.\]
Proceeding as before we find that $\sigma_p=1+R(p)/p+O(p^{-3/2})$ when
$p\nmid 2\Dbad$, and $\sigma_p\ll (v+1)^3$ when $p^v\| 2\Dbad$.  This
leads to a bound
\[\Sigma_1\ll\tau(\Dbad)^{O(1)}\mathfrak{S},\]
which is again satisfactory for Theorem \ref{t:nair}, since
\[\varpi(\Dbad)\tau(\Dbad)^{O(1)}h^{2\eta}\ll\Dbad^{\ve}.\]

Finally we must consider $\Sigma_3$. We have
\[\rho_0(a)\ll a^{3+\eta}(\Dbad,a).\]
Moreover
$\varpi(a)\ll a^{\eta}$, whence 
\[\rho_0(a)\varpi(a)a^{-4}\ll (\Dbad,a)a^{-1+2\eta}\le
\Dbad^{5\eta}a^{1-5\eta}.a^{-1+2\eta}\le
\Dbad^{5\eta}a^{-3\eta}(a/z)^{2\eta},\]
since $a\ge z$.  It follows that
\[\Sigma_3
\ll\Dbad^{5\eta}z^{-2\eta}\twosum{z< a\le z^2}{P^+(a)\le\log X}a^{-\eta}
\le\Dbad^{5\eta}z^{-2\eta}\sum_{\substack{a=1\\ P^+(a)\le\log X}}^\infty
a^{-\eta}.\]
The final sum factors as
\[\prod_{p\le\log X}(1-p^{-\eta})^{-1}=
\exp\left\{\sum_{p\le\log X}O(p^{-\eta})\right\}=\exp\{O((\log X)^{1-\eta}))\},\]
which is $O(z^{\eta})$, say. Thus $\Sigma_3\ll\Dbad^{5\eta}z^{-\eta}$,
so that the contribution to (\ref{st}) is satisfactory, provided that
$\eta<\ve/8$. This suffices for the proof of Theorem \ref{t:nair}, 
since we take $\eta=\ve/13$.
\medskip

It remains to prove Lemma \ref{sieve}.  We define 
\[P=\prod_{\substack{p<\tau\\ p\nmid 2a\Dbad}}p.\]
Then 
\begin{align*}
  U(a;\tau)&\leq \#\left\{\x\in \ZZ^4\cap \mathcal{R}: 
  a\mid Q^*(\x), ~ (Q^*(\x),P)=1 \right\}\\
  &\leq \#\left\{\x\in \ZZ^4\cap \mathcal{R}_0: 
  a\mid Q^*(\x), ~ (Q^*(\x),P)=1 \right\},
\end{align*}
where
\[\mathcal{R}_0=\{\x\in \RR^4:  |\x-\u|\leq X\}.\]

We shall use the Selberg sieve, as presented by Halberstam and Richert
\cite[Theorem 4.1]{HR}. We take $\mathcal{A}$ to be the 
sequence of (not necessarily distinct) values $Q^*(\c)/a$, for 
$\c\in\ZZ^4\cap \mathcal{R}_0$, so that we need to understand
\[\#\mathcal{A}_d=
\#\left\{\x\in \ZZ^4\cap \mathcal{R}_0:  ad\mid Q^*(\x)\right\}.\]
When $d<\tau^2$ we have $ad\le Xz^{-11}\tau^2\le Xz^{-7}$. Thus 
the number of
$\x\in \ZZ^4\cap \mathcal{R}_0$ 
in each residue class modulo $ad$ will be
$\meas(\cR_0)(ad)^{-4}+O(X^3(ad)^{-3})$, whence
\[\#\mathcal{A}_d=\meas(\cR_0)\frac{\rho(ad)}{(ad)^4}+O(X^3\rho(ad)(ad)^{-3}).\]
We are only interested in values $d$ which divide $P$.  Hence
$(a,d)=1$, so that
\[\#\mathcal{A}_d=Y\frac{\omega(d)}{d} +R_d\]
with
\[Y=\meas(\cR_0)\frac{\rho(a)}{a^4}, \quad 
\omega(d)=\frac{\rho(d)}{d^{3}} \quad \text{ and }\quad 
R_d\ll \frac{X^3\rho(a)d}{a^{3}}, \]
Here, the last estimate uses the observation that $\rho(d)\le d^4$.

Lemma \ref{lem:rho} yields
\[\frac{\omega(p)}{p}=\frac{\rho(p)}{p^{4}}=
\frac{1}{p}+O\left(\frac{1}{p^2}\right),\]
for any prime $p\nmid 2\Dbad$. Hence 
$\omega$ satisfies the conditions for \cite[Theorem 4.1]{HR} with
$\kappa=1$. It then follows that
\beql{Ss}
U(a;\tau)\ll Y\prod_{p\mid P}\left(1-\frac{\omega(p)}{p}\right)+
\sum_{d<\tau^2}\tau_3(d)X^3\rho(a)a^{-3}d,
\eeq
where the product is 
\[  \prod_{\substack{p<\tau\\ p\nmid 2a\Dbad }}\left(1-\frac{\rho(p)}{p^{4}}\right)
  \ll \prod_{\substack{p<\tau\\ p\nmid 2a\Dbad}}\left(1-\frac{1}{p}\right)\\
  \ll (\log \tau)^{-1}\varpi(\Dbad)\varpi(a),\]
by Mertens' Theorem.  The main term of (\ref{Ss}) is therefore
  \[\ll\varpi(\Dbad)\varpi(a)\frac{X^4\rho(a)}{a^4\log\tau},\]
  while the secondary term is
  \[\ll X^3\rho(a)a^{-3}\tau^5,\]
  say. The main term therefore dominates, since $a\le Xz^{-11}$ and
  $\tau\le z^2$.  This completes the proof of Lemma~\ref{sieve}.

\section{The final stage}

Returning to  \eqref{eq:NB}, we are now ready to conclude our proof of
Theorem~\ref{t:upper}.  Let  
\[S_h(J,K)=
\sum_{\substack{|\c|\le J,\,\, 1\le |Q^*(\c)|\leq K\\ h\mid Q^*(\c)}}R(Q^*(\c))\]
for $J,K\geq 1$.
We divide the available range for $|\c|$ and $|Q^*(\c)|$ into dyadic
intervals, finding that the terms in (\ref{eq:NB}) satisfy
\[S\ll \sum_{\substack{J\ll B^{1/3}\\ K\ll \|Q\|^3J^2}}
\log\left(2+\frac{J^2\|Q\|^3}{K}\right)S_1(J,K)\]
and
\[S_h\ll\sum_{\substack{J\ll B^{1/3}\\ K\ll \|Q\|^3J^2}}J^{-1}
  \log\left(2+\frac{J^2\|Q\|^3}{K}\right)
  S_h(J,K).   \]
where $J$ and $K$ run over powers of 2.

We will bound $S_h(J,K)$ 
by covering the available region for $\c$ by
boxes of side-length $X=B^{1/6}$.  We therefore need to know how many
such boxes are required.
\begin{lemma}
  If $X=B^{1/6}$ and $1\ll J\ll B^{1/3}$ then the region 
  \[|\x|\le J, \;\; |Q^*(\c)|\leq K\]
  can be covered by
\[\ll J^{3/2}\left\{K^{1/2}X^{-1}|\Delta_Q|^{-3/8}+J^{1/4}\right\}\]
  boxes of side $X$.
\end{lemma}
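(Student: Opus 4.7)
My plan is to reduce the covering problem to a lattice-point count and then exploit the nonsingularity of $Q^*$ via a slicing argument.

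\emph{Step 1 (Lattice-point reduction).} A closed axis-aligned box of side $X$ with corner at $X\a$ (for $\a\in\ZZ^4$) meets the region $\{|\c|\le J,\,|Q^*(\c)|\le K\}$ only when $|\a|\ll J/X$ and, expanding $Q^*$ around $X\a$ as
\[Q^*(\c)=Q^*(X\a)+(\text{linear in }\c-X\a)+Q^*(\c-X\a),\]
one has $|Q^*(X\a)-Q^*(\c)|\ll \|Q\|^3|\a|X^2+\|Q\|^3X^2$ on the box, whence $|Q^*(\a)|\ll K/X^2+\|Q\|^3|\a|$ using the homogeneity of $Q^*$. Thus the number of boxes is bounded by
\[N=\#\{\a\in\ZZ^4:\,|\a|\ll J/X,\ |Q^*(\a)|\ll K/X^2+\|Q\|^3|\a|\}.\]

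\emph{Step 2 (Diagonalization and slicing in $y_4$).} I would then diagonalize $Q^*$ by an orthogonal transformation, reducing to $\sum_{i=1}^4\lambda_iy_i^2$ with $\prod|\lambda_i|=|\Delta_Q|^3$ and each $|\lambda_i|\ll\|Q\|^3$, so the largest eigenvalue satisfies $|\lambda_4|\gg|\Delta_Q|^{3/4}$. Applying the rotation to the lattice (so only the lattice changes, the region does not), one reduces to counting rotated integer points $\y$ in bounded regions; for each $(y_1,y_2,y_3)$ the admissible $y_4$ lies in at most two intervals of total length $\ll\min((K'/|\lambda_4|)^{1/2},\,K'/(|\lambda_4|\,|y_4|),\,J/X)$, where $K'=K/X^2+\|Q\|^3|\a|$.

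\emph{Step 3 (Dyadic summation).} Splitting $|\a|\asymp 2^j$ into dyadic shells for $1\le 2^j\ll J/X$, the count in each shell is at most $(2^j)^3(K_j/|\lambda_4|)^{1/2}$ with $K_j:=K/X^2+\|Q\|^32^j$. Summing geometrically, the $K/X^2$-piece gives $\ll J^3K^{1/2}/(X^4|\Delta_Q|^{3/8})$, and since $J\ll B^{1/3}=X^2$ this is $\ll J^{3/2}K^{1/2}X^{-1}|\Delta_Q|^{-3/8}$. The $\|Q\|^32^j$-piece yields the residual $J^{7/4}$-term (again using $J\ll X^2$), augmented if necessary by the trivial covering bound $(J/X)^4\ll J^{7/4}$ obtained from $J\ll X^2$.

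\emph{Main difficulty.} The main obstacle is the second term of the dyadic sum, which naively carries an extra factor $(\|Q\|^4/|\Delta_Q|)^{3/8}$. To eliminate this one must refine the treatment of the largest dyadic shell, separating a ``bulk'' contribution away from the cone $Q^*=0$ (where one uses the coarea/slicing bound above) from a ``near-cone'' contribution where the integer points lie in $O(\log J)$ ellipsoids by an application of Lemma~\ref{Q*B} with $Q^*$ in place of $Q$. One then bounds the ellipsoidal pieces by invoking Lemma~\ref{lattice} and a lower-dimensional lattice count, and checks that the logarithmic factors arising from the dyadic summation are absorbed into the second term $J^{7/4}$.
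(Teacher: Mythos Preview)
Your overall strategy---reduce to a lattice-point count, diagonalize $Q^*$ orthogonally, and slice in the direction of the largest eigenvalue---is exactly the paper's. The gap is in Step~1: you bound the bilinear correction in the Taylor expansion of $Q^*$ by $\|Q\|^3|\a|$, when the sharp bound is $|\lambda_4|\,|\a|$, with $|\lambda_4|$ the spectral radius of the matrix of $Q^*$. (For any symmetric $M^*$ one has $|\x^T M^*\y|\le |\lambda_4|\,\|\x\|_2\|\y\|_2$, and this is already valid in the original coordinates.) With $K'=K/X^2+|\lambda_4|\,|\a|$ the slicing step gives a $y_4$-interval of length $\ll (K'/|\lambda_4|)^{1/2}\ll K^{1/2}X^{-1}|\lambda_4|^{-1/2}+|\a|^{1/2}$; the factor $|\lambda_4|$ has \emph{cancelled} in the second piece. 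Summing $(2^j)^3(2^j)^{1/2}$ over $2^j\ll J/X$ and using $J/X\ll J^{1/2}$ yields $J^{7/4}$ directly, with no stray factor $(\|Q\|^4/|\Delta_Q|)^{3/8}$.

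The paper realises this cancellation by first replacing boxes with balls (rotation-invariant), rotating so that $Q^*$ becomes $D=\Diag(d_1,\dots,d_4)$ with $|d_1|=\|D\|\ge|\Delta_Q|^{3/4}$, and only \emph{then} performing the Taylor step. The lattice of ball-centres is taken to be $\tfrac18 X\ZZ^4$ in the rotated coordinates, so one counts genuine integer vectors $\mathbf{n}$; the bilinear correction is automatically $\ll \|D\|(J/X)=|d_1|(J/X)$, and solving $d_1n_1^2=U+O(V)$ the $|d_1|$'s cancel. No dyadic decomposition in $|\a|$ is needed. (Your version rotates the lattice instead of the region, which also makes the ``fix $(y_1,y_2,y_3)$, count $y_4$'' step awkward, since the fibres of a rotated copy of $\ZZ^4$ over a coordinate $3$-plane are not integer-spaced; passing to balls sidesteps this.)

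Your proposed fix for the ``Main difficulty''---splitting off a near-cone piece and invoking Lemma~\ref{Q*B} for $Q^*$---is both unnecessary and not obviously applicable: that lemma treats points \emph{on} the cone $Q=0$ lying in a hyperplane section, not points merely \emph{near} the cone of $Q^*$ in $\RR^4$. More to the point, the unwanted factor does not come from the geometry near the cone; it is present uniformly and is purely an artefact of the crude bound $\|Q\|^3$. Once Step~1 is sharpened as above, the whole final paragraph can be deleted.
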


\begin{proof}
Since each such box contains a ball of radius
$X/2$ the number of boxes needed will be at most as large as the
number of balls of radius $X/2$ that are required. By making an
orthonormal change of basis, the problem becomes that of covering the
region
\[\{\x:\,|\x|\le J,\,|D(\x)|\le K\}\]
by balls of radius $[X/2]$, where $D={\rm Diag}(d_1,\ldots,d_4)$
say.  If we arrange the $d_i$ in decreasing order of size we will 
have $|d_1|=\|D\|$. Moreover 
\[d_1\ldots d_4=\det(D)=\det(Q^*)=\Delta_Q^3,\]
whence $|d_1|\ge |\Delta_Q|^{3/4}$. 

If we use balls of radius $X/2$ with centre
$\tfrac18 X\mathbf{n}$, where $\mathbf{n}$ runs over $\ZZ^4$, then they will
cover $\RR^4$.  Moreover, if such a ball overlaps our
region in a point $\x$, then 
we have $\x=\tfrac18 X\mathbf{n}+\y$ for some vector $\y$ with 
$|\y|\le X/2$, so that $|\tfrac18 X\mathbf{n}|\le J+X/2$. Thus 
\[|D(\tfrac18 X\mathbf{n})|=|D(\x-\y)|\le K+X|\mathbf{x}| \|D\|+ 
\frac{X^2}{4}\|D\|.\]
We therefore have to count integer vectors $\mathbf{n}$ for which 
$|\mathbf{n}|\ll J/X+1$ and
\[|D(\mathbf{n})|\ll K/X^2+(J/X)\|D\|+\|D\|.\]
For each choice of $n_2,n_3,n_4$ one has $d_1n_1^2=U+O(V)$,
where
\[U=-d_2n_2^2-d_3n_3^2-d_4n_4^2\]
and
\[V=KX^{-2}+(J/X+1)|d_1|.\]
This condition restricts $n_1$ to an interval of length
$O(\sqrt{V/|d_1|})$, uniformly in $U$.  Since
$|d_1|\ge|\Delta_Q|^{3/4}$, it follows that there are
\[\ll (J/X+1)^3\left\{1+K^{1/2}X^{-1}|\Delta_Q|^{-3/8}+J^{1/2}X^{-1/2}\right\}\]
integer vectors $\mathbf{n}$.  However $J/X+1\ll J^{1/2}$ since
$J\ll B^{1/3}= X^2$, and similarly $1+J^{1/2}X^{-1/2}\ll J^{1/4}$. The lemma
then follows.
\end{proof}

We now wish to apply Theorem \ref{t:nair}, which has the inconvenient
condition (\ref{eq:assume}). Such a condition is typical of such
estimates, but in this instance we can use a trick to handle
situations where $\|Q\|$ is large compared to $B$, so that
(\ref{eq:assume}) may be assumed in the remaining case.
\begin{lemma}\label{lem:pr}
  It suffices to prove Theorem \ref{t:upper} when the entries of
  $\mathbf{M}$ have no common factor.  In the latter case we have
  $N(B)\ll B$ when $\Delta_Q\not=\square$ and $\|Q\|\gg B^{20}$, and
  $N(B)\ll B^2$ when $\Delta_Q=\square$ and $\|Q\|\gg B^{20}$.
\end{lemma}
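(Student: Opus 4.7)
The proof has two components: the reduction to primitive $\mathbf{M}$, and the direct bound when $\|Q\| \gg B^{20}$.

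For the reduction, I would write $\mathbf{M} = k\mathbf{M}_0$ with $\mathbf{M}_0$ primitive, so $Q = kQ_0$ and $N(B;Q) = N(B;Q_0)$. The two invariants $\|Q\|^4/|\Delta_Q|$ and $\Pi_B$ are unchanged under this scaling (the latter because $\Delta_Q/\Delta_{Q_0} = k^4$ is a square, preserving the Kronecker character $\chi$), while $\varpi(\Delta_Q) \geq \varpi(\Delta_{Q_0})$ trivially. For the discriminant factor, every prime $p \mid k$ divides $\Delta_Q$ to order at least $4v_p(k) \geq 4$, hence contributes to $\Dbad(Q)$, giving $\Dbad(Q) \geq k^4\Dbad(Q_0)$ and so $\Dbad(Q)^{1/4+\ve} \geq k^{1+4\ve}\Dbad(Q_0)^{1/4+\ve}$. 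The only term moving the wrong way is $B^2/|\Delta_Q|^{1/4} = k^{-1}B^2/|\Delta_{Q_0}|^{1/4}$, losing a factor $k^{-1}$ which is absorbed by the $k^{1+4\ve}$ gain. Hence the $Q$-bound exceeds the $Q_0$-bound by at least a factor of $k^{4\ve}$, so it suffices to establish Theorem~\ref{t:upper} under the assumption that $\mathbf{M}$ is primitive.

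For the second statement, assume $\mathbf{M}$ has coprime entries and $\|Q\| \gg B^{20}$. To each primitive $\x \in \ZZp^4$ with $|\x| \le B$ and $Q(\x) = 0$, I would attach its second Veronese image $\mathbf{m}(\x) = (x_i x_j)_{i \leq j} \in \ZZ^{10}$. This vector is itself primitive in $\ZZ^{10}$ (since $\gcd(x_i) = 1$ implies $\gcd(x_i x_j) = 1$), has $\ell^\infty$-norm at most $B^2$, lies on the $4$-dimensional Veronese cone $V \subset \RR^{10}$, and, because $Q(\x) = \mathbf{M} \cdot \mathbf{m}(\x)$, lies in the orthogonal lattice $\mathsf{L} = \{\v \in \ZZ^{10} : \v \cdot \mathbf{M} = 0\}$, which has rank $9$ and determinant comparable to the Euclidean norm $\|\mathbf{M}\|_2 \asymp \|Q\|$. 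The correspondence $[\x] \mapsto \mathbf{m}(\x)$ is injective on projective classes, so $N(B)$ is bounded above by the number of such lattice points on $V$. Since $\det(\mathsf{L}) \gg B^{20}$ dwarfs the $9$-dimensional volume $\ll B^{18}$ of the box intersected with the span of $\mathsf{L}$, Minkowski's second theorem forces the integer points of $\mathsf{L}$ in the box to lie in a very low-dimensional ``pencil'' of short directions; cutting further by the six quadratic Veronese relations then restricts to a curve, and a Bombieri--Pila-type count yields $O(B)$. In the split case $\Delta_Q = \square$, the surface $Q = 0$ carries two $\QQ$-rational rulings; each ruling line supports $O(B)$ integer points of height $\le B$, and primitivity together with $\|Q\| \gg B^{20}$ cuts down the relevant line parameters to $O(B)$, giving the claimed $O(B^2)$.

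The principal obstacle is this last extraction: showing that the interaction of the Veronese cone with the successive minima of $\mathsf{L}$ really delivers the sharp exponent $O(B)$, and not merely the generic polynomial count one would read off from the dimension of $\mathsf{L} \cap V$. In practice one likely finesses this by splitting on $|\Delta_Q|$: when $|\Delta_Q|$ is itself large (say at least $B^{80}$) the uniform bound of Heath-Brown~\cite{annal} for nonsingular quadrics already dominates the desired estimate; when $|\Delta_Q|$ is small relative to $\|Q\|^4$, primitivity forces $\mathbf{M}$ to be extremely close to a singular matrix, and a direct lattice-reduction argument pins the solutions down. Either way, the key is to exploit that the size of $\|Q\|$ is ``wasted'' in the lattice determinant while the solutions live in a box too small to contain more than $O(B)$ lattice points on the Veronese cone.
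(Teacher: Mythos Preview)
Your reduction to primitive $\mathbf{M}$ is essentially the paper's argument, with one slip: $\Pi_B$ is \emph{not} literally invariant under $Q\mapsto kQ$. At primes $p\mid k$ the Kronecker symbol $\bigl(\tfrac{\Delta_Q}{p}\bigr)$ vanishes (since $p^4\mid\Delta_Q$), whereas $\bigl(\tfrac{\Delta_{Q_0}}{p}\bigr)$ may be $\pm 1$. The correct inequality is $\Pi_B(Q_0)\le\varpi(k)\,\Pi_B(Q)$, exactly as the paper records; the extra factor $\varpi(k)\ll_\ve k^{\ve}$ is absorbed by your $k^{4\ve}$ slack, so the conclusion survives.

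The second half, however, is not a proof. You set up the Veronese embedding into $\ZZ^{10}$ correctly, but then try to count points directly on the Veronese cone inside the thin lattice $\mathsf{L}$ via ``Bombieri--Pila-type'' bounds and a split on $|\Delta_Q|$, and you yourself flag that you do not know how to extract the exponent $O(B)$ from this. The paper's route is much cleaner and avoids all of this: it quotes Lemma~3 of \cite{n-2}, whose content (and whose proof is essentially your Veronese observation carried to its natural conclusion) is that when $Q$ is primitive and $\|Q\|\gg B^{20}$, the solutions with $|\x|\le B$ lie on a second quadric $Q'(\x)=0$ linearly independent from $Q$. (In your language: the Veronese images are primitive points of height $\le B^2$ in a rank-$9$ lattice of determinant $\asymp\|Q\|\gg B^{18}$, so they cannot span the hyperplane and must satisfy a further linear relation, which pulls back to $Q'$.) One then counts points on the degree-$4$ curve $Q=Q'=0$: when $\Delta_Q\neq\square$ the surface $Q=0$ contains no $\QQ$-line, so every $\QQ$-component of the curve has degree $\ge 2$ and Walsh's uniform bound gives $N(B)\ll B$; when $\Delta_Q=\square$ a $\QQ$-line may appear and contributes $O(B^2)$. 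Your case split on $|\Delta_Q|$ and the appeal to successive minima of $\mathsf{L}$ interacting with the cone are unnecessary detours; the missing step is simply to pass from ``large determinant'' to ``second quadric'' and then count on the resulting curve.
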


\begin{proof}
Suppose that Theorem \ref{t:upper} has been proved for primitive forms
$Q$, and suppose that $Q=kQ'$, with $Q'$ primitive.  Then
\begin{align*} 
\varpi(\Delta_{Q'})&\le\varpi(\Delta_Q),\quad\Dbad(Q')=k^{-4}\Dbad(Q),\quad
\frac{\|Q'\|^{5/2}}{|\Delta_{Q'}|^{5/8}}=\frac{\|Q\|^{5/2}}{|\Delta_Q|^{5/8}},\\
\Pi_B(Q')&\le\varpi(k)\Pi_B(Q),\quad \text{ and }\quad 
B^{4/3}+\frac{B^2}{|\Delta_{Q'}|^{1/4}}
\le k\left(B^{4/3}+\frac{B^2}{|\Delta_Q|^{1/4}}\right).\end{align*}
Since $N(B)$ is the same for the two forms $Q'$ and $Q$ we see that if
Theorem~\ref{t:upper} holds for $Q'$ then it holds for $Q$.

When the form $Q$ is primitive we may apply Lemma 3 of Browning and
Heath-Brown \cite{n-2}, which shows that all relevant solutions of
$Q(\x)=0$ will lie on a second quadric surface $Q'(\x)=0$, unless
$\|Q\|\ll B^{20}$.   As already
remarked, the surface $Q(\x)$ will not contain a $\QQ$-line when
$\Delta_Q\not=\square$, so that any component of $Q=Q'=0$ which is
defined over $\QQ$ must have degree at least 2.  We then see that
$N(B)\ll B$, by the work of Walsh \cite{walsh}.  When
$\Delta_Q=\square$ the intersection $Q=Q'=0$ may contain a $\QQ$-line,
and then the contribution to $N(B)$ is $O(B^2)$.
\end{proof}

We plan to apply Theorem \ref{t:nair}
assuming that  
$\|Q\|\ll B^{20}$, say, so that $Q^*(\c)\ll B^{182/3}$
for $|\c|\ll B^{1/3}$.  With $X=B^{1/6}$ 
the condition (\ref{eq:assume}) holds for large
enough $B$, with $A=365$, say.
The theorem bounds $S^{(h)}(X)$ independently of the location of the box
under consideration, so that if $h\le X^{1-\ve}$ we have
\[S\ll S^{(1)}(X)\sum_{\substack{J\ll B^{1/3}\\ K\ll \|Q\|^3J^2}}
\log\left(2+\frac{J^2\|Q\|^3}{K}\right)
J^{3/2}\left\{K^{1/2}X^{-1}|\Delta_Q|^{-3/8}+J^{1/4}\right\},\]
and
\[S_h\ll S^{(h)}(X)\sum_{\substack{J\ll B^{1/3}\\ K\ll \|Q\|^3J^2}}
  \log\left(2+\frac{J^2\|Q\|^3}{K}\right)
J^{1/2}\left\{K^{1/2}X^{-1}|\Delta_Q|^{-3/8}+J^{1/4}\right\},\]
the variables $J$ and $K$ running over powers of 2.

 Since $\Delta_Q\ll\|Q\|^4$ we now find that
 \begin{align*}
 \sum_{ K\ll \|Q\|^3J^2}\log\left(2+\frac{J^2\|Q\|^3}{K}\right)
\left\{K^{1/2}X^{-1}|\Delta_Q|^{-3/8}+J^{1/4}\right\}
&\ll
J\|Q\|^{3/2}X^{-1}|\Delta_Q|^{-3/8}+J^{1/4}(\log B)^2\\
&\ll  \|Q\|^{3/2}|\Delta_Q|^{-3/8}(JX^{-1}+J^{1/4}(\log B)^2)\\
&\ll  \|Q\|^{3/2}|\Delta_Q|^{-3/8}B^{1/6}.
\end{align*}
We may multiply by $J^{3/2}$ and sum over $J$ to find that
\[S\ll S^{(1)}(X)\|Q\|^{3/2}|\Delta_Q|^{-3/8}B^{2/3}.\]
Similarly, we can multiply by $J^{1/2}$ and sum to obtain
\[S_h\ll S^{(h)}(X)\|Q\|^{3/2}|\Delta_Q|^{-3/8}B^{1/3},\]
provided that $h\le X^{1-\ve}$.

We now apply Theorem \ref{t:nair}, whence
\[S\ll \Dbad^{\ve}\|Q\|^{3/2}|\Delta_Q|^{-3/8}B^{2/3}
\mathfrak{S}\frac{X^4}{\log X}\ll
\Dbad^{\ve}\left(\frac{\|Q\|^4}{|\Delta_Q|}\right)^{3/8}B^{4/3}
  \frac{\mathfrak{S}}{\log B},\]
  and
\begin{align*}
B\frac{\|Q\|\Dbad^{1/4}}{|\Delta_Q|^{1/2}}
\max_h\frac{h}{(\Dbad^3,h^4)^{1/4}}S_h
&\ll
B\frac{\|Q\|\Dbad^{1/4}}{|\Delta_Q|^{1/2}}\Dbad^{\ve}
\|Q\|^{3/2}|\Delta_Q|^{-3/8}B^{1/3}\mathfrak{S}\frac{X^4}{\log X}\\
&\ll \Dbad^{1/4+\ve}\left(\frac{\|Q\|^4}{|\Delta_Q|}\right)^{5/8}
\frac{B^2}{|\Delta_Q|^{1/4}}\frac{\mathfrak{S}}{\log B}.
\end{align*}
The condition $h\le X^{1-\ve}$ is satisfied automatically for $h\mid\Dbad^3$,
under the assumption that $\Dbad\le B^{1/20}$.
When we insert these bounds
into (\ref{eq:NB}) we find that the estimate for $S$ dominates $B$,
since $\mathfrak{S}\ge 1$, whence
\[N(B)\ll \left\{\Dbad^{\ve}\left(\frac{\|Q\|^4}{|\Delta_Q|}\right)^{3/8}B^{4/3}
+\Dbad^{1/4+\ve}\left(\frac{\|Q\|^4}{|\Delta_Q|}\right)^{5/8}
\frac{B^2}{|\Delta_Q|^{1/4}}\right\}\frac{\mathfrak{S}}{\log B}.\]

To complete the proof of Theorem \ref{t:upper} all we need do is
estimate $\mathfrak{S}$. When $p\mid 2\Delta_Q$ or $\chi(p)=1$ we have
\[1\le 1+\frac{R(p)}{p}=1+\frac{2}{p}\le \left(1+\frac{1}{p}\right)^2\le \left(1+\frac{1}{p}\right)(1-p^{-1})^{-1},\]
while if $\chi(p)=-1$ we have
\[1= 1+\frac{R(p)}{p}= \left(1-\frac{1}{p}\right)(1-p^{-1})^{-1}.\]
It follows that 
\[\mathfrak{S}\le
(1+1/2)\varpi(\Delta_Q)\prod_{p\le B}\left(1+\frac{\chi(p)}{p}\right)
\prod_{p\le B}(1-p^{-1})^{-1},\]
whence
\[\frac{\mathfrak{S}}{\log B}\ll\varpi(\Delta_Q)\Pi_B,\]
by Mertens' Theorem and the definition \eqref{eq:cQ} of $\Pi_B$. 
This suffices for Theorem~\ref{t:upper}
when $\Delta_Q\neq \square$.

\section{The case of square discriminant}

The preceding argument needs minor modifications when $\Delta_Q$ is a
non-zero square. We will need a number of basic facts from Diophantine
geometry, and will be relatively brief, since the case of non-square
discriminants is the main focus of the paper.
 
Almost all of our argument goes through as
before. Indeed Lemma 
\ref{lem:pr} was already formulated in a way that caters for the
present case. However, at the end
of Section \ref{sP} we can no longer dispose of points on $\QQ$-lines
so readily. We must therefore allow for an additional contribution to $N(B)$
resulting from points which lie on $\QQ$-lines 
$L_1,\ldots,L_N$ contained in the
intersection of the surface $Q=0$ with various
planes $\c.\x=0$.  These planes will have $|\c|\ll B^{1/3}$, and each
such plane can contain at most two such lines.  There are therefore
$N=O(B^{4/3})$ lines to consider.

The integer points on a $\QQ$-line $L$ 
 form a  $2$-dimensional integer sublattice of
determinant $\mathsf{d}(L)$  say.  
A straightforward application of Lemma \ref{lem:dav} shows that the
number of primitive integer points
on $L$ which have height at most $B$ is $O(1+B^2/\mathsf{d}(L))$.
It follows that when $\Delta_Q=\square$ we have an extra
contribution to $N(B)$ of order
\[\sum_{n\le N}\left(1+\frac{B^2}{\mathsf{d}(L_n)}\right).\]

Any $\QQ$-line $L\subset \mathbb{P}^3$ 
corresponds to a rational  point $P_L$ on 
the Grassmannian
$\mathbb{G}(1,3)\subset\mathbb{P}^5$, 
via the familiar Pl\"{u}cker embedding. 
Each $P_L\in \mathbb{G}(1,3)(\QQ)$ has a height 
 $H(P_L)$, which is the Euclidean norm of the corresponding 
primitive integer vector in $\mathbb{P}^5(\QQ)$. Moreover, we have 
$H(P_L)=\mathsf{d}(L)$.  
Consider the subset 
of $P_L\in \mathbb{G}(1,3)$ for which the  line $L$ is contained
in the smooth quadric surface $Q=0$.
According to   Harris \cite[Ex.~6.7]{harris}, this set is the locus of a
smooth conic in $\mathbb{P}^5$. 
But an irreducible  conic in $\mathbb{P}^5$ has
$O(H)$ rational points of height at most $H$
by the work of Walsh \cite{walsh}, 
with an implied constant independent of the conic.
We will write $c$ for the constant occurring here. 
It follows that, for any positive $H$, the number of $\QQ$-lines $L$ 
contained in the surface and having $\mathsf{d}(L)\le H$, is 
at most $cH$. 

Suppose that we have ordered the lines $L_n$ in order of 
non-decreasing height, so that 
$\mathsf{d}(L_n)=h_n$
with $h_1\leq \dots \leq h_N$. 
Taking $H=h_n$ above, we deduce that $n\le ch_n$, since there are at least 
$n$ admissible lines of height up to $h_n$. 
But then $\mathsf{d}(L_n)=h_n\geq c^{-1}n$ for each $n$ and 
it follows that 
\[\sum_{n\le N}\left(1+\frac{B^2}{\mathsf{d}(L_n)}\right)\ll
\sum_{n\le N}\left(1+\frac{B^2}{n}\right)\ll B^{4/3}+B^2\log B.
\]
Thus the extra contribution from the $\QQ$-lines  is $O(B^2\log B)$.

It follows that
\[N(B)\ll_{\ve} B^2\log B+\varpi(\Delta_Q)\Dbad^{1/4+\ve}
\left(\frac{\|Q\|^4}{|\Delta_Q|}\right)^{5/8}
\Pi_B\left(B^{4/3}+\frac{B^2}{|\Delta_Q|^{1/4}}\right),\]
with
\[\Pi_B=\prod_{p\le B}\left(1+\frac{\chi(p)}{p}\right)=
\prod_{p\le B,\,p\nmid\Delta_Q}\left(1+\frac{1}{p}\right)\gg
|\Delta_Q|^{-\ve}\log B.\]
However since $\Delta_Q$ is a square we have
$\Dbad=|\Delta_Q|$, so that
\begin{align*}
\varpi(\Delta_Q)\Dbad^{1/4+\ve}
\left(\frac{\|Q\|^4}{|\Delta_Q|}\right)^{5/8}
\Pi_B\left(B^{4/3}+\frac{B^2}{|\Delta_Q|^{1/4}}\right)&\gg
|\Delta_Q|^{\ve}\left(\frac{\|Q\|^4}{|\Delta_Q|}\right)^{5/8}\Pi_B B^2
\gg  B^2\log B.
\end{align*}
It follows that the term $B^2\log B$ is dominated by the other terms.
This suffices to cover the case in which $\Delta_Q$ is a non-zero square.

\section*{Acknowledgments} 

During the preparation of this  paper the authors were
supported by the NSF under Grant No.\ DMS-1440140,  while  in residence at 
the {\em  Mathematical Sciences Research Institute} in Berkeley, California,
during the Spring 2017 semester.

\bibliographystyle{amsplain}

\begin{dajauthors}
\begin{authorinfo}[tdb]
Tim Browning\\ 
School of Mathematics\\
University of Bristol\\ Bristol\\ BS8 1TW\\ UK\\
and\\ 
IST Austria\\ Am Campus 1\\ 3400 Klosterneuburg\\ Austria\\
t.d.browning@bristol.ac.uk, tdb@ist.ac.at
\end{authorinfo}

\begin{authorinfo}[rhb]
D.R. Heath-Brown\\
Mathematical Institute\\
Radcliffe Observatory Quarter\\ Woodstock Road\\ Oxford\\ OX2 6GG\\ UK\\
rhb@maths.ox.ac.uk
\end{authorinfo}

\end{dajauthors}

\end{document}